\newtheorem{theorem}{Theorem}[section]
\newtheorem{lemma}[theorem]{Lemma}
\newtheorem{proposition}[theorem]{Proposition}
\newtheorem{corollary}[theorem]{Corollary}
\theoremstyle{remark}
\newtheorem{remark}[theorem]{Remark}
\newtheorem*{remark*}{Remark}
\theoremstyle{definition}
\let\Re\undefined\DeclareMathOperator{\Re}{Re}
\DeclareMathOperator{\Log}{Log}
\newcommand{\ds}{\displaystyle}
\title{The Aluthge and the mean transforms of $m$-isometries}
\author{Fadil Chabbabi}
\author{Maëva Ostermann}
\address{Department of Mathematics, Faculty of Sciences Tetouan, Abdelmalek Essaadi University, B. P. 2121 Tetouan, Morocco}
\email{f.chabbabi@uae.ac.ma}
\address{D\'epartement de math\'ematiques et de statistique, Universit\'e Laval,
Qu\'ebec City (Qu\'ebec),  Canada G1V 0A6.}
\email{maeva.ostermann@mat.ulaval.ca}
\keywords{ m-isometries, polar decomposition, mean transform, Aluthge transform, weighted shift}
\subjclass[2010]{47A05, 47B49, 47B37}
\date{}
\begin{document}
\maketitle
\begin{abstract}
Let $T\in B(H)$ be a bounded linear operator on a Hilbert space $H$,
let $T = V|T|$ be its polar decomposition of $T$ and let $\lambda\in [0,1]$. The $\lambda$-Aluthge transform $\Delta_{\lambda}(T)$ and  the mean transforms $M(T)$ are  defined  respectively by: 
\[\Delta_{\lambda}(T):=|T|^{\lambda}V|T|^{1-\lambda} \;\; \text{and} \;\; M(T):=\frac12(|T|V+V|T|).\] 
In this paper, we use several examples of weighted shift operators to prove that the Aluthge and mean transforms do not preserve the class of  $m-$isometries in any directions. 

\end{abstract}
\section{Introduction}
Let $H$ be a complex Hilbert space and $B(H)$ be the algebra of the linear and bounded operator on $H$. For $T\in B(H)$, we consider its polar decomposition $T=V|T|$, where $V$ is the associate partial isometric verifying $Ker(V)=Ker(T)$ and $|T|=(T^*T)^{1/2}$ is the module of $T$. Then this partial isometric $V$ verifies
\[V^*V=P_{\overline{Im(|T|)}}\quad \text{and}\quad VV^*=P_{\overline{Im(T)}}~,\]
where $P_M$ denotes the orthogonal projection on the closed subspace $M$. It is well known that if $T=V|T|$ is the polar decomposition of $T$ then, $|T^*|=V|T|V^*$ and $T^*=V^*|T^*|$ is the polar decomposition of $T^*$.

Related to the polar decomposition of an operator, we consider the Aluthge transform, introduced by A. Aluthge in \cite{Aluthge1990} to study some spectral properties of $p$-hyponormal operators. For an operator $T\in B(H)$ and $T=V|T|$ be its polar decomposition, the Aluthge transform of  $T$ is defined by $\Delta(T)=|T|^{1/2}V|T|^{1/2}$. This definition has been  generalized in \cite{Aluthge1996}. For $\lambda\in[0,1]$, the $\lambda$-Aluthge transform of $T$ is  defined  by \[\Delta_\lambda(T):=|T|^\lambda V|T|^{1-\lambda}.\] Note that $\Delta_0(T)=T$, $\Delta_{1/2}(T)=\Delta(T)$ is the classical Aluthge transform and $\Delta_1(T)=\tilde T$ is the Duggal transform. The Aluthge transform has been studied by many authors, see \cite{AntezanaMasseyStojanoff2005,ChabbabiMbekhta2017,ChabbabiMbekhta2020,Garcia2008,JungKoPearcy2000,JungKoPearcy2001,Wang2003}.

The mean transform of an operator $T=V|T|$ was introduced in \cite{LeeLeeYoon2014} and defined by 
\[M(T):=\frac12(|T|V+V|T|)=\frac12(T+\tilde T),\]
i.e. as the arithmetic mean of $T$ and the Duggal transform $\tilde T$ of $T$.
For more details on the mean transform, for example see \cite{BenhedaChoKoLee2020,BenhidaCurtoLee2019,ChabbabiCurtoMbekhta2019,ChabbabiMbekhta2017,ChabbabiMbekhta2019,ChabbabiMbekhta2020,Lee2016,LeeLeeYoon2014,Yamazaki2019preprint,Zamani2021}. 
For a $\lambda\in[0,1]$, we consider the generalized $\lambda$-mean transform defined by
\[M_\lambda(T)=\frac12\Big(\Delta_\lambda(T)+\Delta_{1-\lambda}(T)\Big),~T\in B(H).\]
Note that $M_\lambda=M_{1-\lambda}$ and that for all $T\in B(H)$, $M_0(T)=M(T),~M_{1/2}(T)=\Delta(T)$.

This paper is inspired by the paper of Botelho and Jamison \cite{BotelhoJamison2010Al}. More precisely, their counterexamples concern the fact that none of the implications between ``$T$ is a 2-isometry'' and ``The Aluthge transform of $T$ is a 2-isometry'' hold. Concretely, we generalize this result for m-isometries and the $\lambda$-Aluthge transform in a fist time and for the mean transform in a second time.

Let $m\ge1$. We say that an operator $T\in B(H)$ is a $m$-isometry if 
\[I+\sum_{k=1}^m(-1)^k\binom{m}{k}(T^*)^kT^k=0,\]
that is equivalent to say that, for all $x\in H$, 
\[\sum_{k=0}^m(-1)^k\binom{m}{k}\|T^kx\|^2=0.\]
Note that if $m=1$, the 1-isometries are exactly the isometries. The class of $m$-isometries has also been well studied, we refer to \cite{AglerStankus1995I,AglerStankus1995II,AglerStankus1996} for more detailed information on the m-isometries.

After recalling some results on weighted shifts in section \ref{Section:WeightedShifts}, we use some examples of these operators to show a several results related to the Aluthge, mean transforms and the class of $m-$isometries. More precisely, in section \ref{Section:Aluthge}, we show  in Theorem \ref{T:AntecedantAluthge} that there exists an operator $T$ which is not a $m$-isometry for any $m\ge1$, such that its Aluthge transform $\Delta(T)$ is an isometry. In Theorem \ref{T:Aluthge}, we prove that there exists  a $2$-isometry $T$ such that its $\lambda$-Aluthge transform $\Delta_\lambda(T)$ is not a $m$-isometry for any $m\ge 2$ and $\lambda\in (0,1)$. In section \ref{Section:Mean},  we show similar results for the mean transform and the class of 2-isometries. In section \ref{section:T&MeanOrAluthge}, we prove that a weighted shift and its Aluthge or mean tranform cannot be simultaneously $2$-isometries, except if the weighted shift is an isometry.
\section{Preliminary results on the weighted shifts}\label{Section:WeightedShifts}
In this section, we recall some results on weighted shifts that will be useful in the next sections. We also take the opportunity to recall some proofs of these results.
Let $l^2$ be the Hilbert space of complex value sequences $x=(x_j)_{j\ge1}$ such that $\|x\|^2=\sum_{j=1}^\infty |x_j|^2$ is finite. Let $(e_j)$ be the canonical basis of $l^2$, i.e. $e_j$ is the sequence defined by \[e_j(n)=\begin{cases}
1&\text{if }j=n,\\
0&\text{if }j\neq n.
\end{cases}
\]
The unilateral shift $S$ is the bounded linear operator on $l^2$ satisfying $Se_j=e_{j+1}$. For a bounded sequence $(a_j)_{j\ge1}$, we define the weighted shift associated to $(a_j)_{j\ge1}$ as the operator $T\in B(l^2)$ satisfying $Te_j=a_je_{j+1}$. 
\subsection{The image of a weighted shift by the mean and the Aluthge transforms}

\begin{lemma}
Let $T\in B(l^2)$ is the weighted shift associated to a bounded sequence $(a_j)_j$. Then the polar decomposition of $T$ is $T=V|T|$ where 
\begin{itemize}
    \item $V$ is the weighted shift associated to the sequence $(\zeta_j)$ defined by $\zeta_j=0$ if $a_j=0$ and $\zeta_j=e^{i\theta_j}=a_j/|a_j|$ if $a_j\neq0$.
    \item $|T|$ is the diagonal operator defined by $|T|e_j=|a_j|e_j$.
\end{itemize}
\end{lemma}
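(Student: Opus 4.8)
The plan is to compute the polar decomposition of a weighted shift directly from the definitions, verifying that the proposed $V$ and $|T|$ satisfy $T = V|T|$, that $|T|$ is positive with $|T| = (T^*T)^{1/2}$, and that $V$ is the correct partial isometry with $\mathrm{Ker}(V) = \mathrm{Ker}(T)$. First I would record the action of the adjoint: since $Te_j = a_j e_{j+1}$, one checks that $T^* e_{j+1} = \overline{a_j} e_j$ and $T^* e_1 = 0$, so $T^*T e_j = T^*(a_j e_{j+1}) = |a_j|^2 e_j$. This immediately identifies $T^*T$ as the diagonal operator with entries $|a_j|^2$, and since it is positive its unique positive square root is the diagonal operator $|T| e_j = |a_j| e_j$, establishing the second bullet.

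Next I would verify the first bullet by checking the factorization $V|T| = T$ on the basis. Applying the diagonal operator first gives $|T| e_j = |a_j| e_j$, and then $V(|a_j| e_j) = |a_j|\, \zeta_j e_{j+1}$. When $a_j \ne 0$ this equals $|a_j| \cdot (a_j/|a_j|)\, e_{j+1} = a_j e_{j+1} = T e_j$, and when $a_j = 0$ both $\zeta_j$ and $a_j$ vanish so both sides are zero; hence $V|T| = T$ on every basis vector and therefore on all of $l^2$ by linearity and boundedness.

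It then remains to confirm that $V$ is genuinely the partial isometry arising in the polar decomposition, i.e.\ that $\mathrm{Ker}(V) = \mathrm{Ker}(T)$. Here I would describe $\mathrm{Ker}(T)$ as the closed span of those $e_j$ with $a_j = 0$, and observe that $V e_j = \zeta_j e_{j+1}$ vanishes exactly when $a_j = 0$ by the definition of $\zeta_j$, so $V$ and $T$ annihilate the same basis vectors and hence have the same kernel. To see that $V$ is a partial isometry, one notes that on the orthogonal complement of $\mathrm{Ker}(V)$—spanned by the $e_j$ with $a_j \ne 0$—the map sends an orthonormal system $\{e_j\}$ to the orthonormal system $\{\zeta_j e_{j+1}\}$ (the $|\zeta_j| = 1$ factors do not affect norms and the target vectors are distinct basis elements, hence orthonormal), so $V$ is isometric on $(\mathrm{Ker}\,V)^{\perp}$. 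By the uniqueness of the polar decomposition of $T$ (the partial isometry is uniquely determined by the requirement $\mathrm{Ker}(V) = \mathrm{Ker}(T)$), the pair $(V, |T|)$ just constructed is the polar decomposition.

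The main obstacle, if any, is bookkeeping rather than conceptual: one must handle the degenerate indices where $a_j = 0$ carefully, since there the weight $\zeta_j$ is set to $0$ and $V$ fails to be isometric, so the claim that $V$ is a partial isometry—not an isometry—must be stated with the correct kernel. In the generic case where all $a_j \ne 0$, the operator $V$ is simply the unweighted unilateral shift composed with the diagonal phase operator $e_j \mapsto e^{i\theta_j} e_{j+1}$, and everything is immediate; the only subtlety is ensuring the general statement correctly accommodates vanishing weights.
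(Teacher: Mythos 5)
Your proof is correct and follows essentially the same route as the paper: both verify $T^*Te_j=|a_j|^2e_j$ to identify $|T|$ as the diagonal operator and pair it with the weighted shift $V$ of unimodular (or zero) weights. The only difference is that you spell out the facts the paper dismisses with ``clearly''---that $V$ is a partial isometry, that $\mathrm{Ker}(V)=\mathrm{Ker}(T)$, and that $V|T|=T$ on basis vectors---which is a sound and welcome elaboration, not a different argument.
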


\begin{proof}
Let $A$ be the diagonal operator defined on $l^2$ by $Ae_j=|a_j|e_j$ and $V$ be the weighted shift associated to the sequence $(\zeta_j)$. Clearly $V$ is a partial isometry such that $Ker(V)=Ker(T)$ and $T=VA$. We just have to verify that $A=|T|$. So let $j\ge 1$, since we have that $T^*e_{j+1}=\overline{a_j}e_j$, we deduce that $T^*Te_j=T^*(a_je_{j+1})=|a_j|^2e_j=A^2e_j$. 
\end{proof}

\begin{proposition}
Let $T$ be the weighted shift associated to $(a_j)$. Let $\zeta_j=0$ if $a_j=0$ and $\zeta_j=a_j/|a_j|$ if $a_j\neq0$ Then $M(T)$ is the weighted shift associated to $(m_j)$ with
\[m_j=\zeta_j\frac{|a_j|+|a_{j+1}|}{2}.\]
\end{proposition}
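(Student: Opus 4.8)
The plan is to reduce everything to computing how the relevant operators act on the canonical basis $(e_j)$, since a bounded operator that sends each $e_j$ to a scalar multiple of $e_{j+1}$ is, by definition, a weighted shift. I would begin by invoking the previous lemma to fix the explicit form of the polar decomposition $T=V|T|$: the partial isometry $V$ is the weighted shift with $Ve_j=\zeta_je_{j+1}$, and $|T|$ is the diagonal operator with $|T|e_j=|a_j|e_j$. With these two formulas in hand, the whole statement becomes a basis-level computation of $M(T)=\tfrac12(|T|V+V|T|)$.

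The key step is to evaluate the two summands separately on $e_j$, being careful with the index on the diagonal factor. For the first term, applying $V$ first gives $Ve_j=\zeta_je_{j+1}$, and then the diagonal operator multiplies by the entry attached to the \emph{shifted} index, so $|T|Ve_j=\zeta_j|a_{j+1}|e_{j+1}$. For the second term, applying $|T|$ first picks up the \emph{unshifted} entry, giving $|T|e_j=|a_j|e_j$, and then $V$ shifts, so $V|T|e_j=|a_j|\zeta_je_{j+1}$. I would display these two identities and then add them:
\[
M(T)e_j=\frac12\bigl(\zeta_j|a_{j+1}|+|a_j|\zeta_j\bigr)e_{j+1}=\zeta_j\,\frac{|a_j|+|a_{j+1}|}{2}\,e_{j+1}.
\]
Reading off the coefficient gives exactly $m_j=\zeta_j\frac{|a_j|+|a_{j+1}|}{2}$, and since every $e_j$ is sent to a multiple of $e_{j+1}$ this identifies $M(T)$ as the asserted weighted shift.

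There is no real obstacle here: the only subtlety worth flagging is the asymmetry in which index the diagonal factor reads off in the two products ($|a_{j+1}|$ versus $|a_j|$), which is precisely what produces the average of consecutive moduli. I would also remark that the degenerate case $a_j=0$ is harmless, since then $\zeta_j=0$ and the formula returns $m_j=0$, consistent with both summands vanishing on $e_j$. Boundedness of $(m_j)$, and hence of $M(T)$, follows immediately from boundedness of $(a_j)$, so no convergence issues arise.
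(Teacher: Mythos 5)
Your proof is correct and follows essentially the same route as the paper: both reduce to evaluating $M(T)=\tfrac12(|T|V+V|T|)$ on the basis vectors $e_j$ using the polar decomposition from the preceding lemma, the only cosmetic difference being that the paper writes the term $V|T|e_j$ directly as $Te_j=a_je_{j+1}$ while you expand it as $|a_j|\zeta_je_{j+1}$, which is the same quantity.
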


This is a particular case of Proposition 2.2 in \cite{LeeLeeYoon2014}. 

\begin{proof}
Let $T=V|T|$ be the polar decomposition of $T$. 
\[M(T)e_j=\frac12(Te_j+|T|Ve_j)=\frac12(a_je_{j+1}+|T|\zeta_je_{j+1})=\frac{\zeta_j}2(|a_j|+|a_{j+1}|)e_{j+1}.\]
\end{proof}

\begin{proposition}
Let $T$ be the weighted shift associated to a sequence $(a_j)$ with $a_j\neq0$ and $\theta_j\in\mathbb R$ such that $a_j=e^{i\theta_j}|a_j|$. Let $\lambda\in[0,1]$. Then the $\lambda$-Aluthge transform of $T$ is the weighted shift associated to $(b_n)$
with $b_j=e^{i\theta_j}|a_j|^{1-\lambda}|a_{j+1}|^{\lambda}$. 
\end{proposition}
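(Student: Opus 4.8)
The plan is to compute the action of $\Delta_\lambda(T)$ directly on each basis vector $e_j$ and read off the resulting weights. Since all $a_j \neq 0$, the polar decomposition from the previous lemma gives $V e_j = \zeta_j e_{j+1} = e^{i\theta_j} e_{j+1}$ and $|T| e_j = |a_j| e_j$. The diagonal operator $|T|$ is positive with eigenvalues $|a_j|$ on $e_j$, so any real power acts by $|T|^s e_j = |a_j|^s e_j$; in particular $|T|^{1-\lambda} e_j = |a_j|^{1-\lambda} e_j$. This is the only place where the functional calculus for positive operators is needed, and because $|T|$ is diagonal in the given orthonormal basis it is completely elementary.

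The key computation is then the following chain, applying the three factors of $\Delta_\lambda(T) = |T|^\lambda V |T|^{1-\lambda}$ from right to left:
\[
\Delta_\lambda(T) e_j = |T|^\lambda V |T|^{1-\lambda} e_j = |T|^\lambda V\bigl(|a_j|^{1-\lambda} e_j\bigr) = |a_j|^{1-\lambda} |T|^\lambda \bigl(e^{i\theta_j} e_{j+1}\bigr).
\]
Now $|T|^\lambda e_{j+1} = |a_{j+1}|^\lambda e_{j+1}$, so the right-hand side equals $e^{i\theta_j} |a_j|^{1-\lambda} |a_{j+1}|^\lambda e_{j+1}$. This is exactly $b_j e_{j+1}$ with $b_j = e^{i\theta_j} |a_j|^{1-\lambda} |a_{j+1}|^\lambda$, which identifies $\Delta_\lambda(T)$ as the weighted shift with weight sequence $(b_j)$.

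There is essentially no obstacle here: the argument is a short, direct computation, entirely parallel to the proof given for the mean transform $M(T)$ in the preceding proposition. The only points that require a moment of care are confirming that $|T|^\lambda$ and $|T|^{1-\lambda}$ really do act diagonally with the powered eigenvalues (immediate from diagonality), and tracking which index carries the exponent $\lambda$ versus $1-\lambda$: the factor $|T|^{1-\lambda}$ hits $e_j$ and contributes $|a_j|^{1-\lambda}$, whereas the outer factor $|T|^\lambda$ hits the shifted vector $e_{j+1}$ and contributes $|a_{j+1}|^\lambda$. I would present the single displayed computation above and conclude.
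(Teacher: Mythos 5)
Your proof is correct and is essentially identical to the paper's: both compute $\Delta_\lambda(T)e_j$ by applying the factors $|T|^{1-\lambda}$, $V$, $|T|^\lambda$ from right to left, using the polar decomposition from the preceding lemma and the fact that powers of the diagonal operator $|T|$ act diagonally. Your added remarks on the functional calculus and index bookkeeping are sound but do not change the argument.
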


See \cite{Aluthge1990,BotelhoJamison2010Al} for the case $\lambda=1/2$ and \cite{Aluthge1996} for the general case.

\begin{proof}
Let $T=V|T|$ be the polar decomposition of $T$. Then, for $j\ge1$, we have
\begin{align*}
\Delta_\lambda(T)e_j=|T|^\lambda V|T|^{1-\lambda}e_j
&=|T|^\lambda V\big(|a_j|^{1-\lambda}e_j\big)\\
&=|T|^{\lambda}\big(e^{i\theta_j}|a_j|^{1-\lambda}e_{j+1}\big)\\
&=e^{i\theta_j}|a_j|^{1-\lambda}|a_{j+1}|^{\lambda}e_{j+1}=b_je_{j+1}.
\end{align*}
\end{proof}

As a direct consequence of the image of the weighted shifts by the generalized $\lambda$-Aluthge transform, we have the following result.

\begin{corollary}Let $\lambda\in [0,1]$ and let $T$ be the weighted shift on $l^2$ associated to a sequence $(a_j)$, with $a_j>0$ for all $j\ge1$. Then $M_\lambda(T)$ is the weighted shift associated to $(m_{\lambda,j})$ with 
\[m_{\lambda,j}=\frac12\Big(a_j^\lambda a_{j+1}^{1-\lambda}+a_j^{1-\lambda}a_{j+1}^\lambda\Big).\]
\end{corollary}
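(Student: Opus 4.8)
The plan is to derive this directly from the definition $M_\lambda(T)=\frac12\big(\Delta_\lambda(T)+\Delta_{1-\lambda}(T)\big)$ together with the preceding proposition describing the $\lambda$-Aluthge transform of a weighted shift. First I would note that since $a_j>0$ for all $j\ge1$, the hypotheses of that proposition hold with $\theta_j=0$, so every phase factor $e^{i\theta_j}$ equals $1$. Applying the proposition with parameter $\lambda$ shows that $\Delta_\lambda(T)$ is the weighted shift associated to $\big(a_j^{1-\lambda}a_{j+1}^{\lambda}\big)_j$, and applying it with parameter $1-\lambda$ shows that $\Delta_{1-\lambda}(T)$ is the weighted shift associated to $\big(a_j^{\lambda}a_{j+1}^{1-\lambda}\big)_j$.

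The second step is to combine these two operators. Both $\Delta_\lambda(T)$ and $\Delta_{1-\lambda}(T)$ act on the canonical basis by $e_j\mapsto(\text{weight})\,e_{j+1}$; that is, they are forward weighted shifts with the same shift structure. Consequently their average is again a weighted shift whose weights are the average of the corresponding weights: for bounded sequences $(b_j)$ and $(c_j)$, the operator $e_j\mapsto b_je_{j+1}$ added to $e_j\mapsto c_je_{j+1}$ yields $e_j\mapsto(b_j+c_j)e_{j+1}$. Hence $M_\lambda(T)$ is the weighted shift associated to $m_{\lambda,j}=\frac12\big(a_j^{1-\lambda}a_{j+1}^{\lambda}+a_j^{\lambda}a_{j+1}^{1-\lambda}\big)$, which is the claimed formula.

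There is no genuine obstacle in this argument, since the corollary is a bookkeeping consequence of the two facts already established. The only point worth checking is that the two Aluthge transforms are shifts in the same direction so that their weights may simply be added; the positivity assumption $a_j>0$ also guarantees that no phase factors survive, so the resulting weights are exactly the real quantities displayed above.
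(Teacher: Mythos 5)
Your proposal is correct and matches the paper's intended argument exactly: the paper states this corollary as a direct consequence of the proposition on the $\lambda$-Aluthge transform of weighted shifts, and your two steps (apply that proposition with parameters $\lambda$ and $1-\lambda$, noting the phases vanish since $a_j>0$, then average the weights) are precisely that consequence spelled out. Nothing is missing.
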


\subsection{Characterization of weighted shifts that are m-isometries}
The following result was remarked in \cite{BotelhoJamison2010mIso}.

\begin{theorem}
Let $T$ be the weighted shift associated to a sequence $(a_j)$ and $m\ge1$. Then $T$ is a $m$-isometry if and only if 
\[1+\sum_{k=1}^m(-1)^k\binom{m}{k}\prod_{j=0}^{k-1}|a_{j+n}|^2=0,~n\ge1.\]
\end{theorem}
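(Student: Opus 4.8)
The plan is to exploit the fact that for a weighted shift all the iterated operators $(T^*)^kT^k$ are simultaneously diagonal in the canonical basis $(e_n)$, so that the defining operator identity of an $m$-isometry collapses to a scalar condition on each basis vector. First I would record the action of the powers of $T$. Since $Te_n=a_ne_{n+1}$, an immediate induction gives
\[T^k e_n=\Big(\prod_{j=0}^{k-1}a_{n+j}\Big)e_{n+k},\qquad n\ge1,\ k\ge0.\]
Using $T^*e_{m+1}=\overline{a_m}\,e_m$, the same induction yields $(T^*)^k e_{n+k}=\big(\prod_{j=0}^{k-1}\overline{a_{n+j}}\big)e_n$, and combining the two computations gives
\[(T^*)^kT^k e_n=\Big(\prod_{j=0}^{k-1}|a_{n+j}|^2\Big)e_n,\]
so that each $(T^*)^kT^k$ is the diagonal operator whose diagonal entry at $e_n$ is $\prod_{j=0}^{k-1}|a_{n+j}|^2$.

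Next I would assemble the operator $D_m:=I+\sum_{k=1}^m(-1)^k\binom{m}{k}(T^*)^kT^k$ from the definition of an $m$-isometry. Being a finite linear combination of diagonal operators, $D_m$ is itself diagonal, with
\[D_m e_n=\Big(1+\sum_{k=1}^m(-1)^k\binom{m}{k}\prod_{j=0}^{k-1}|a_{n+j}|^2\Big)e_n,\qquad n\ge1.\]

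Finally I would invoke the elementary observation that a diagonal (equivalently, self-adjoint diagonal) operator vanishes if and only if all of its diagonal entries vanish: if $D_m=0$ then in particular $\langle D_m e_n,e_n\rangle=0$ for every $n$, while conversely if every diagonal entry is $0$ then $D_m e_n=0$ for all $n$, hence $D_m=0$ by linearity and boundedness on the dense span of the $(e_n)$. Consequently $T$ is an $m$-isometry, i.e. $D_m=0$, exactly when
\[1+\sum_{k=1}^m(-1)^k\binom{m}{k}\prod_{j=0}^{k-1}|a_{j+n}|^2=0\quad\text{for all }n\ge1,\]
which is precisely the asserted characterization. There is no genuine obstacle in this argument; the only points requiring a little care are the bookkeeping of the conjugate weights when passing from $T^k$ to $(T^*)^kT^k$, and the remark that evaluating the quadratic form on basis vectors alone suffices exactly because the operator $D_m$ is diagonal in that basis.
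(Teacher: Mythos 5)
Your proof is correct and takes essentially the same route as the paper: both hinge on the computation $T^ke_n=\big(\prod_{j=0}^{k-1}a_{n+j}\big)e_{n+k}$ and the observation that everything diagonalizes in the canonical basis, so the $m$-isometry condition reduces to a scalar identity on each $e_n$. The only cosmetic difference is that you phrase this at the operator level (each $(T^*)^kT^k$, hence $D_m$, is diagonal, and a bounded diagonal operator vanishes iff its entries do), whereas the paper phrases it at the quadratic-form level, using the orthogonality of the family $(T^ke_n)_n$ to write $\|T^kx\|^2=\sum_n|x_n|^2\|T^ke_n\|^2$ and reduce the condition $\sum_{k=0}^m(-1)^k\binom{m}{k}\|T^kx\|^2=0$ for all $x$ to the same condition on basis vectors.
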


\begin{proof}
Let $n\ge1$ then
 \[T^k(e_n)=\left(\prod_{j=0}^{k-1}a_{j+n}\right)e_{n+k}.\]
We deduce that  $(T^k(e_n))_n$ is an orthogonal family and then for $x=\sum x_ne_n\in l^2$, we have that
\[\|T^kx\|^2=\sum_{n=0}^\infty|x_n|^2\|T^ke_n\|^2.\]
Then we deduce that
\begin{align*}
T \text{ is a $m$-isometry}
&\iff~\forall x\in l^2,~ \sum_{k=0}^m(-1)^k\binom{m}{k}\|T^kx\|^2=0\\
&\iff~\forall n\ge1,~\sum_{k=0}^m(-1)^k\binom{m}{k}\|T^ke_n\|^2\\
&~\hspace{3.9cm}=1+\sum_{k=1}^m(-1)^k\binom{m}{k}\prod_{j=0}^{k-1}|a_{j+n}|^2=0.
\end{align*}
\end{proof}
When $m=2$, we deduce the following characterization of the weighted shift that are $2$-isometries.
\begin{theorem}\label{T:Shift2isoCarac}
Let $T$ be the weighted shift associated to a sequence $(a_j)_{j\ge1}$. Suppose that $T$ is not an isometry. Then $T$ is a $2$-isometry if and only if there exists $a>-1$ such that 
\[|a_j|=\sqrt{\frac{j+1+a}{j+a}},~j\ge1.\]
\end{theorem}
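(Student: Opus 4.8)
The plan is to apply the previous theorem (the general $m$-isometry characterization for weighted shifts) with $m=2$ and to solve the resulting recurrence for $\ds w_j := |a_j|^2$. Setting $m=2$, the binomial coefficients are $\binom{2}{1}=2$ and $\binom{2}{2}=1$, so the characterizing condition becomes, for every $n\ge 1$,
\[
1 - 2\,|a_n|^2 + |a_n|^2|a_{n+1}|^2 = 0.
\]
Writing $w_n=|a_n|^2>0$ (the $w_n$ are positive since an isometry is excluded and a zero weight would force the expression to equal $1\neq 0$), this reads $w_n w_{n+1} - 2 w_n + 1 = 0$, hence $\ds w_{n+1} = 2 - \frac{1}{w_n}$.

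First I would verify that the claimed formula satisfies this recurrence. With the ansatz $\ds w_j = |a_j|^2 = \frac{j+1+a}{j+a}$, a direct substitution into $w_n w_{n+1} - 2w_n + 1$ should collapse to $0$; this is the routine "sufficiency" direction and amounts to checking that $\ds \frac{n+1+a}{n+a}\cdot\frac{n+2+a}{n+1+a} - 2\,\frac{n+1+a}{n+a} + 1$ telescopes to zero. Second, for the "necessity" direction I would solve the recurrence from scratch rather than merely guessing. The cleanest route is to linearize the Möbius-type recurrence $\ds w_{n+1}=2-\frac{1}{w_n}$ by the standard substitution $\ds w_n = \frac{p_{n+1}}{p_n}$ (or equivalently $u_n = \frac{1}{w_n-1}$), which converts it into an affine first-order recurrence whose fixed point is the double root $w=1$; this parabolic (repeated-root) case is exactly what produces the linear-in-$j$ numerator and denominator in the formula. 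Solving gives $\ds \frac{1}{w_n - 1} = \frac{1}{w_1-1} + (n-1)$, i.e. $w_n$ is a ratio of two arithmetic progressions in $n$, which after renaming the free constant as $a$ yields the stated expression.

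The final bookkeeping step is to translate the single integration constant of the recurrence into the constant $a$ and to pin down its range. Since the whole sequence is determined by $w_1=|a_1|^2$, there is exactly one free parameter; parametrizing $\ds w_1 = \frac{2+a}{1+a}$ recovers the general solution and forces $a > -1$ so that every $w_j = \frac{j+1+a}{j+a}$ is positive and finite for all $j\ge 1$ (for $a>-1$ both $j+a$ and $j+1+a$ stay positive, whereas $a\le -1$ would make some denominator vanish or a weight negative, contradicting $w_j=|a_j|^2>0$). I would also note that $a$ cannot be chosen so that all $w_j=1$, since that is precisely the excluded isometry case, and indeed no finite $a$ gives the constant sequence. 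The main obstacle I anticipate is the necessity direction: one must argue that \emph{every} positive solution of the recurrence is of this form, which requires correctly handling the degenerate (double-root) linearization rather than assuming distinct roots, and then verifying that the admissible constants are exactly $\{a : a>-1\}$ with boundedness of $(|a_j|)$ following automatically since $\ds \frac{j+1+a}{j+a}\to 1$.
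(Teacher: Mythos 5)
Your strategy for the necessity direction is essentially the paper's own: specialize the general $m$-isometry characterization to $m=2$, getting $w_n w_{n+1}-2w_n+1=0$ for $w_n=|a_n|^2$, and linearize this recurrence by the substitution $u_n=1/(w_n-1)$, which turns it into $u_{n+1}=u_n+1$. The paper uses exactly this substitution (it writes $u_j=(a_j^2-1)^{-1}$ and derives $u_{j+1}=u_j+1$), concludes that $u_j=j+a$ is arithmetic, and pins down $a>-1$ from positivity of $(j+1+a)/(j+a)$, just as you do; the sufficiency check by direct substitution is also identical.

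There is, however, one step your plan glosses over and that the paper treats explicitly, and it is a step your argument genuinely needs: before you may form $u_n=1/(w_n-1)$ (in particular before writing $1/(w_1-1)$), you must know that $w_n\neq 1$ for \emph{every} $n$, not merely that the $w_n$ are not all equal to $1$. The hypothesis ``$T$ is not an isometry'' only gives the latter, so as written your necessity argument risks a division by zero. The paper closes this by noting that the recurrence propagates the value $1$ in both directions: if $w_n=1$ then $w_{n+1}=2-1/w_n=1$, and conversely if $w_{n+1}=1$ then $w_nw_{n+1}-2w_n+1=1-w_n=0$, so $w_n=1$; hence a single weight of modulus one forces all weights to equal one, i.e.\ $T=S$, contradicting the hypothesis. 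Your parenthetical remarks handle $w_n>0$ and observe that no value of $a$ yields the constant sequence, but neither of these is the needed statement. With this one-line propagation lemma inserted, your proof is complete and coincides with the paper's.
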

\begin{proof} Without loss of generality, suppose that $a_j\ge0$. In this case, the condition ``$T$ is not an isometry'' become $T\neq S$ which mean that $a_j\neq1$ for some $j$.\\
Suppose first that there exists $a>-1$ such that 
\[a_j=\sqrt{\frac{j+1+a}{j+a}},~j\ge1.\]
Then for all $j\ge1$, we have
\begin{multline*}
    a_{j+1}^2a_j^2-2a_j^2+1=\frac{j+2+a}{j+a}-2\frac{j+1+a}{j+a}+1\\=\frac{j+2+a-2(j+1+a)+j+a}{j+a}=0.
\end{multline*}
Then we deduce that $T$ is a $2$-isometry.

Suppose now that $T$ is a $2$-isometry. Suppose there exists $j\ge1$ such that $a_j=1$. Since
\begin{equation}\label{Eq:2iso}
a_{j+1}^2a_j^2-2a_j^2+1=0,~j\ge1,
\end{equation}
we can deduce that $a_j=1$ for any $j\ge1$ and thus $T=S$. Then, suppose that $T\neq S$. Thus $a_j\neq1$ for all $j\ge1$. Moreover, since the equation \ref{Eq:2iso} is equivalent to $a_j^2(2-a_{j+1}^2)=1$, we have that $a_j\neq0$ and then if we divide this equality by $a_j^2$ and put $u_j=(a_j^2-1)^{-1}$, we obtain the following relation:
\[u_{j+1}=\frac1{a_{j+1}^2-1}=\frac{1}{\ds~1-\frac1{a_j^2}~}=\frac{a_j^2}{a_j^2-1}=u_j+1.\]
Therefore $(u_j)$ is an arithmetic sequence and thus there exists $a$ such that
\[u_j=j+a=\frac1{a_j^2-1},\text{~then~}a_j^2=\frac{j+1+a}{j+a},\]
and thus 
\[a_j=\sqrt{\frac{j+1+a}{j+a}},~j\ge1.\]
Finally, since $\frac{j+1+a}{j+a}>0$ for any $j\ge1$, we have that $a>-1$.
\end{proof}
\section{The Aluthge transform and m-isometries}\label{Section:Aluthge}

In this section, we generalized results of  Bethelo and Jamison in \cite{BotelhoJamison2010Al} for m-isometries and the generalized Aluthge transform. The following theorem gives the existence of operators that are not $m$-isometries such that their Aluthge transforms are isometries.

\begin{theorem}\label{T:AntecedantAluthge}
There exists an operator $T\in B(l^2)$ that is not a $m$-isometry, for any $m\ge2$ such that $\Delta(T)$ is an isometry.
\end{theorem}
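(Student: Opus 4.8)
The plan is to exhibit an explicit weighted shift. Since the earlier propositions reduce everything about weighted shifts to their weight sequences, I only have to choose a bounded sequence $(a_j)$ so that (i) the Aluthge weights all have modulus $1$, making $\Delta(T)$ an isometry, and (ii) the $m$-isometry sum from the characterization theorem never vanishes.

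First I would recall that $\Delta(T)=\Delta_{1/2}(T)$ is the weighted shift with weights $b_j=e^{i\theta_j}\sqrt{|a_j|\,|a_{j+1}|}$, and that a weighted shift is an isometry precisely when every weight has modulus $1$. Thus condition (i) becomes $|a_j|\,|a_{j+1}|=1$ for all $j\ge1$. The cleanest way to force this is to make the moduli alternate: fix a real $t>0$ with $t\neq1$ and set $a_j=t$ for $j$ odd and $a_j=1/t$ for $j$ even (taking the weights real positive, so $\theta_j=0$). Then $|a_j|\,|a_{j+1}|=t\cdot t^{-1}=1$, so $\Delta(T)=S$ is the unilateral shift, an isometry, and $T$ is patently bounded with nonzero weights, so the Aluthge-transform proposition applies.

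Next I would test the $m$-isometry condition using the characterization theorem: $T$ is an $m$-isometry iff $1+\sum_{k=1}^m(-1)^k\binom{m}{k}\prod_{j=0}^{k-1}|a_{n+j}|^2=0$ for every $n\ge1$. Writing $s=t^2$, the product $\prod_{j=0}^{k-1}|a_{n+j}|^2$ collapses because consecutive factors are $s$ and $1/s$: for $k$ even it equals $1$, and for $k$ odd it equals $s$ when $n$ is odd (and $1/s$ when $n$ is even). Substituting and splitting the sum over the parity of $k$, the computation reduces to the identity $\sum_{k\text{ even}}\binom{m}{k}=\sum_{k\text{ odd}}\binom{m}{k}=2^{m-1}$, which yields, for odd $n$,
\[
\sum_{k=0}^m(-1)^k\binom{m}{k}\prod_{j=0}^{k-1}|a_{n+j}|^2=2^{m-1}(1-s).
\]
Since $s=t^2\neq1$, this is nonzero for every $m\ge1$, so the characterization fails and $T$ is not an $m$-isometry for any $m\ge2$ (in fact for any $m\ge1$).

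The calculation is entirely elementary; the only place that needs care — the \emph{main obstacle} — is the bookkeeping of the alternating product $\prod_{j=0}^{k-1}|a_{n+j}|^2$, where one must track both the parity of $k$ and the parity of the starting index $n$ to see that the expression collapses to the clean value above. Once that pattern is in hand, the binomial identity finishes the argument immediately.
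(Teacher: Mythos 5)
Your proof is correct and follows essentially the same route as the paper: the paper's proof uses the alternating weight sequence with values $1/2$ and $2$ (the Botelho--Jamison example, i.e.\ your construction with $t=1/2$), shows $\Delta(T)=S$, and evaluates the $m$-isometry sum at $n=1$ via the same binomial identity $\sum_{k\ \text{even}}\binom{m}{k}=\sum_{k\ \text{odd}}\binom{m}{k}=2^{m-1}$ to get a strictly positive value. Your version with general $t\neq 1$ and both parities of $n$ is a mild generalization of the same computation, not a different argument.
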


The case $m=2$ is done by Bethelo -- Jamison \cite{BotelhoJamison2010Al}. We prove that them example works for any $m\ge2$.

\begin{proof}
Let $T$ be the weighted shift on $l^2$ associated to the sequence defined by $a_{2j+1}=1/2$ and $a_{2j}=2$ for  $j\ge1$. Then
\[\prod_{j=0}^{k-1}|a_{j+1}|^2=\prod_{j=1}^k|a_j|^2=
\begin{cases}
1&\text{ if $k$ is even,}\\
\ds\frac12&\text{ if $k$ is odd.}
\end{cases}\]
Let $m\ge1$, then
\begin{align*}
1+\sum_{k=1}^m(-1)^k\binom{m}{k}\prod_{j=0}^{k-1}|a_{j+1}|^2
   &=1+\sum_{\underset{k~\text{even}}{1\le k\le m}}\binom{m}{k}-\frac12\sum_{\underset{k~\text{odd}}{1\le k\le m}}\binom{m}{k}\\
   &=\sum_{\underset{k~\text{even}}{0\le k\le m}}\binom{m}{k}-\frac12\sum_{\underset{k~\text{odd}}{1\le k\le m}}\binom{m}{k}\\
   &=2^{m-1}-2^{m-2}=2^{m-2}>0.
\end{align*}
Then $T$ is not a $m$-isometry. But its Aluthge transform is $\Delta(T)=S$
that is an isometry on $l^2$.
\end{proof}

The following theorem gives the existence of operators that are not $2$-isometries such that there
$\lambda-$Aluthge transforms are isometries.

\begin{theorem}\label{T:AntecedantLambdaAluthge}
Let $\lambda \in(0,1)$. There exists an operator $T_\lambda\in B(l^2)$ that is not a $2$-isometry such that $\Delta_\lambda(T_\lambda)$ is an isometry.
\end{theorem}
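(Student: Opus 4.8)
The plan is to look for $T_\lambda$ among the weighted shifts and to read off the two requirements from the description of $\Delta_\lambda$ on such operators. Let $T_\lambda$ be the weighted shift associated with a sequence $(a_j)$ of positive numbers. By the computation of the $\lambda$-Aluthge transform of a weighted shift, $\Delta_\lambda(T_\lambda)$ is again a weighted shift, with weights $b_j=a_j^{1-\lambda}a_{j+1}^{\lambda}$; since a weighted shift is an isometry exactly when all of its weights have modulus $1$, the condition ``$\Delta_\lambda(T_\lambda)$ is an isometry'' is equivalent to $a_j^{1-\lambda}a_{j+1}^{\lambda}=1$ for all $j\ge1$. Passing to logarithms, this is the first-order linear recurrence $\log a_{j+1}=-\tfrac{1-\lambda}{\lambda}\log a_j$, whose solutions are $a_j=a_1^{\,r^{\,j-1}}$ with $r=-\tfrac{1-\lambda}{\lambda}$.

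I would first dispose of the range $\lambda\in[\tfrac12,1)$. There $|r|=\tfrac{1-\lambda}{\lambda}\le1$, so $(\log a_j)=(r^{\,j-1}\log a_1)$ is bounded and $(a_j)$ is bounded and bounded away from $0$; hence $T_\lambda\in B(l^2)$ and, by construction, $\Delta_\lambda(T_\lambda)=S$ is an isometry. Fixing any $a_1\ne1$ makes $T_\lambda$ non-isometric. It remains to check that $T_\lambda$ is not a $2$-isometry, for which I would use Theorem \ref{T:Shift2isoCarac}: the weights of a non-isometric $2$-isometric shift are $\sqrt{(j+1+c)/(j+c)}$ with $c>-1$, which are all $>1$; but $r<0$ forces $\log a_j=r^{\,j-1}\log a_1$ to change sign with $j$, so our weights lie on both sides of $1$ and cannot be of that form. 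This settles the theorem for $\lambda\ge\tfrac12$ and recovers the Botelho--Jamison sequence $2,\tfrac12,2,\tfrac12,\dots$ when $\lambda=\tfrac12$.

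The main obstacle is the range $\lambda\in(0,\tfrac12)$, where $|r|>1$ and the recurrence forces $|\log a_j|=|r|^{\,j-1}|\log a_1|\to\infty$ for every $a_1\ne1$; thus \emph{no} bounded scalar weighted shift has its $\lambda$-Aluthge transform equal to an isometry, and a different mechanism is required. The natural symmetry to invoke is the reflection $\lambda\leftrightarrow1-\lambda$: for $\mu=1-\lambda>\tfrac12$ the previous paragraph produces a bounded weighted shift $R$ with $\Delta_{\mu}(R)=S$, and for \emph{invertible} $X$ one has the adjoint relation $\Delta_\lambda(X^*)\cong\Delta_{1-\lambda}(X)^*$ (unitary equivalence), which suggests passing to an adjoint in order to trade $\mu$ for $\lambda$.

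The step I expect to be genuinely hard is that this reflection cannot be applied to $R$ itself: a unilateral weighted shift is not invertible, $R^*$ has a nontrivial kernel, and a direct computation shows that $\Delta_\lambda(R^*)$ annihilates $e_1$ and $e_2$, so it is not an isometry. Overcoming this forces one out of the class of scalar weighted shifts, onto an operator carrying the symmetry without creating a kernel. Concretely, I would try the ansatz $T_\lambda=S\,|T_\lambda|$ (whose polar decomposition has partial isometry $S$), so that $\Delta_\lambda(T_\lambda)=|T_\lambda|^{\lambda}S|T_\lambda|^{1-\lambda}$; writing $B=|T_\lambda|^{2\lambda}$, the requirement $\Delta_\lambda(T_\lambda)^*\Delta_\lambda(T_\lambda)=I$ becomes the operator equation $S^*BS=B^{-(1-\lambda)/\lambda}$. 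Its diagonal solutions are exactly the divergent sequences above, so the crux is to exhibit a \emph{bounded} positive invertible $B$ whose off-diagonal part absorbs the blow-up of the diagonal recurrence, and then to verify that the resulting $T_\lambda$ is not a $2$-isometry. Producing and estimating that operator is the part of the argument I anticipate will require the most work.
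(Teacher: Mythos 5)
Your argument for $\lambda\in[\tfrac12,1)$ is correct and is essentially the paper's own construction: the weights $a_j=x^{r^j}$, $r=(\lambda-1)/\lambda$, give a bounded shift whose $\lambda$-Aluthge transform is $S$, and your way of ruling out the $2$-isometry property (by Theorem~\ref{T:Shift2isoCarac} the weights of a non-isometric $2$-isometric shift are all $>1$, while yours straddle $1$) is, if anything, cleaner than the paper's comparison of the two functions $f(x)=2x^{2r}$ and $g(x)=1+x^{2r+2r^2}$ near $x=1$.

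The genuine gap is the range $\lambda\in(0,\tfrac12)$, which you explicitly leave open. Your diagnosis of the obstruction there is right: since $|r|>1$ and $r<0$, the recurrence $\log a_{j+1}=r\log a_j$ has no bounded nonzero solution, so no bounded weighted shift other than one with unimodular weights has an isometric $\lambda$-Aluthge transform. Be aware that this observation already refutes the paper's own proof of this case: the paper asserts that for $\lambda\in(0,\tfrac12)$ the shift $B_x$ is bounded when $0<x<1$, but the odd-indexed weights $x^{r^j}=(1/x)^{|r|^j}$ tend to infinity, so $B_x\notin B(l^2)$ for every $x\neq1$.

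However, the repair you sketch cannot succeed, because the statement itself is false for $\lambda\in(0,\tfrac12)$. Suppose $\Delta_\lambda(T)$ is an isometry and write $T=V|T|$. From $\|x\|=\|\,|T|^{\lambda}V|T|^{1-\lambda}x\|\le\|\,|T|^{\lambda}\|\,\|\,|T|^{1-\lambda}x\|$, the positive operator $|T|^{1-\lambda}$ is bounded below, hence $|T|$ is invertible and $V$ is an isometry; the identity $\Delta_\lambda(T)^*\Delta_\lambda(T)=I$ then reads $V^*|T|^{2\lambda}V=|T|^{-2(1-\lambda)}$. Put $m=\min\sigma(|T|)>0$ and $M=\max\sigma(|T|)$. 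Since $V^*V=I$, the left-hand side satisfies $m^{2\lambda}I\le V^*|T|^{2\lambda}V\le M^{2\lambda}I$, while the spectrum of the right-hand side contains both $m^{-2(1-\lambda)}$ and $M^{-2(1-\lambda)}$; hence $m^{2\lambda}M^{2(1-\lambda)}\le 1\le m^{2(1-\lambda)}M^{2\lambda}$, which gives $(M/m)^{2(1-2\lambda)}\le1$. For $\lambda<\tfrac12$ this forces $M=m$, so $|T|=mI$, then $m^{2\lambda}=m^{-2(1-\lambda)}$ gives $m=1$ and $T=V$ is an isometry (hence a $2$-isometry). In particular your operator equation $S^*BS=B^{-(1-\lambda)/\lambda}$ has no bounded positive invertible solution except $B=I$ (this is the same computation with $V=S$), so no ansatz of the form $T_\lambda=S|T_\lambda|$ --- indeed no operator at all --- can witness the claim for $\lambda\in(0,\tfrac12)$. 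The honest conclusion is that your proof is complete exactly on $[\tfrac12,1)$, and that on $(0,\tfrac12)$ the theorem, together with the paper's proof of it, is wrong, rather than that more work is needed there.
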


\begin{proof}
Suppose that $a_j\ge 0$ and $a_j^{1-\lambda} a_{j+1}^{\lambda}=1$. Then
\[a_j=x^{\left(\frac{\lambda-1}{\lambda}\right)^j}~\text{for some}~x>0.\]
Let $\lambda \in[1/2,1)$, then $\left|\frac{\lambda-1}{\lambda}\right|\le 1$ and thus for any choice of $x>0$, the weighted shift $B_x$ associated to $\left(x^{\left(\frac{\lambda-1}{\lambda}\right)^j}\right)_j$ is a bounded operator on $l^2$ and $\Delta_\lambda(B_x)=S$. To finish the proof, we verify that $B_x$ is not a 2-isometry for some choice of $x$.
To show that, we could notice that the weight of $B_x$ cannot be put in the form given in Theorem \ref{T:Shift2isoCarac}. We will rather here make a more direct calculation. Suppose that $B_x$ is a 2-isometry then
\begin{align*}
0&=1-2x^{2\left(\frac{\lambda-1}{\lambda}\right)}+x^{2\left(\frac{\lambda-1}{\lambda}\right)}x^{2\left(\frac{\lambda-1}{\lambda}\right)^2}\\
&=1-2x^{2\left(\frac{\lambda-1}{\lambda}\right)}+x^{2\frac{\lambda(\lambda-1)+(\lambda-1)^2}{\lambda^2}},
\intertext{and this is true if and only if}
2x^{2\frac{\lambda-1}{\lambda}}&=1+x^{2\frac{\lambda(\lambda-1)+(\lambda-1)^2}{\lambda^2}}.
\end{align*}
Let $f(x)=2x^{2\frac{\lambda-1}{\lambda}}$ and $g(x)=1+x^{2\frac{\lambda(\lambda-1)+(\lambda-1)^2}{\lambda^2}}$ for $x>0$. Then
\begin{align*}
    f'(1)&=4\frac{\lambda-1}{\lambda},\intertext{and}
    g'(1)&=2\frac{\lambda(\lambda-1)+(\lambda-1)^2}{\lambda^2}.
\end{align*}
Since $\lambda\neq1$ then $f'(1)\neq g'(1)$ and in particular $f\not\equiv g$ on $(0,1)$. 

If $\lambda\in(0,1/2)$ then $B_x$ is bounded for $0<x<1$, and we can remark that 
\[\lim_{x\to0}f(x)=+\infty\neq \lim_{x\to0}g(x)=1.\]

Then, in the two cases,  there exists $x_\lambda\in (0,1)$ such that $f(x_\lambda)\neq g(x_\lambda)$ and thus such that $T_\lambda:=B_{x_\lambda}$ is not a 2-isometry. 
\end{proof}
The following theorem gives the existence of $2$-isometries such that their $\lambda$-Aluthge transforms are not $m$-isometries.

\begin{theorem}\label{T:Aluthge}
There exists a 2-isometry $T\in B(l^2)$ such that $\Delta_\lambda(T)$ is not a $m$-isometry, for any $m\ge2$ and any $\lambda\in(0,1)$.
\end{theorem}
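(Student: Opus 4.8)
The plan is to exhibit a single weighted-shift $2$-isometry whose $\lambda$-Aluthge transform fails the $m$-isometry identity for \emph{every} $m$ simultaneously, by reducing that identity to the question of whether one explicit product is a polynomial in $n$. First I would take $T$ to be the weighted shift with positive weights $a_j=\sqrt{(j+1)/j}$, i.e.\ the instance $a=0$ of Theorem~\ref{T:Shift2isoCarac}; this is bounded (since $a_j=\sqrt{1+1/j}\le\sqrt2$) and is a genuine, non-isometric $2$-isometry. Since the $a_j$ are nonzero, the Proposition giving the $\lambda$-Aluthge transform of a weighted shift applies, so $\Delta_\lambda(T)$ is the weighted shift with positive weights $b_j=a_j^{1-\lambda}a_{j+1}^{\lambda}$, whence $b_j^{2}=\bigl(\tfrac{j+1}{j}\bigr)^{1-\lambda}\bigl(\tfrac{j+2}{j+1}\bigr)^{\lambda}$.

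Next I would reformulate the $m$-isometry condition. By the characterization of $m$-isometric weighted shifts recalled just before Theorem~\ref{T:Shift2isoCarac}, $\Delta_\lambda(T)$ is an $m$-isometry if and only if $\sum_{k=0}^{m}(-1)^k\binom{m}{k}\prod_{i=n}^{n+k-1}b_i^{2}=0$ for all $n\ge1$. Introducing the partial product $P_n:=\prod_{i=1}^{n-1}b_i^{2}$ (with $P_1=1$), the inner product is $P_{n+k}/P_n$, so after clearing $P_n$ the condition becomes $\sum_{k=0}^{m}(-1)^k\binom{m}{k}P_{n+k}=0$ for all $n$; that is, the $m$-th (forward) finite difference of the sequence $(P_n)$ vanishes identically. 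Since a sequence has vanishing $m$-th finite difference for all $n$ exactly when it is the restriction to the integers of a polynomial of degree at most $m-1$, the entire theorem reduces to the single assertion that $P_n$ is \emph{not} a polynomial in $n$: in that case $\Delta_\lambda(T)$ is an $m$-isometry for no $m$ at all.

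Finally I would compute $P_n$ in closed form by telescoping: $\prod_{i=1}^{n-1}\bigl(\tfrac{i+1}{i}\bigr)^{1-\lambda}=n^{1-\lambda}$ and $\prod_{i=1}^{n-1}\bigl(\tfrac{i+2}{i+1}\bigr)^{\lambda}=\bigl(\tfrac{n+1}{2}\bigr)^{\lambda}$, so $P_n=2^{-\lambda}\,n^{1-\lambda}(n+1)^{\lambda}$. To show this is not a polynomial I would argue as follows. As $P_n/n\to2^{-\lambda}$, a finite nonzero limit, any polynomial agreeing with $(P_n)$ at all integers must be linear, say $P_n=2^{-\lambda}n+\beta$; letting $n\to\infty$ in $P_n-2^{-\lambda}n$ then forces $\beta=2^{-\lambda}\lambda$. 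However, the expansion $P_n=2^{-\lambda}n\,(1+1/n)^{\lambda}=2^{-\lambda}n+2^{-\lambda}\lambda-\dfrac{2^{-\lambda}\lambda(1-\lambda)}{2n}+O(n^{-2})$ carries a nonzero coefficient $-2^{-\lambda}\lambda(1-\lambda)/2$ on $1/n$ (here $\lambda\in(0,1)$), so $n\bigl(P_n-2^{-\lambda}n-2^{-\lambda}\lambda\bigr)\to-2^{-\lambda}\lambda(1-\lambda)/2\neq0$, contradicting $P_n\equiv2^{-\lambda}n+2^{-\lambda}\lambda$. Thus $P_n$ is not a polynomial, and $\Delta_\lambda(T)$ is not an $m$-isometry for any $m\ge2$ (indeed for any $m\ge1$).

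The step I expect to be the main obstacle is exactly this last one. It is tempting but insufficient to say merely that $n^{1-\lambda}(n+1)^{\lambda}$ ``involves fractional powers,'' because a non-polynomial function can in principle coincide with a polynomial at every integer. The rigorous argument must therefore pin down the only possible degree through the growth bound $P_n/n\to2^{-\lambda}$ and then exclude even a linear match via the genuine asymptotic expansion, whose nonvanishing $1/n$ term does the work; the delicacy lies in running this reasoning through integer arguments rather than treating it as an identity between functions.
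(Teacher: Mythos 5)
Your proof is correct, and it takes a genuinely different route from the paper's. The paper starts from the same operator and the same weights $b_j$, but then argues by complex analysis: it interpolates the $m$-isometry defect at the points $1/n$ by a function $f$ holomorphic on the half-plane $\Omega_m=\{z:\Re(z)>-1/(m+1)\}$, concludes $f\equiv 0$ from the identity theorem (the zeros $1/n$ accumulate at $0\in\Omega_m$), and then shows this would force $z\mapsto(1+(m+1)z)^{\lambda}$ to continue holomorphically across its branch point at $-1/(m+1)$, which is refuted by an explicit nonzero contour integral (Cauchy--Goursat). You instead observe that, after dividing by $P_n$, the $m$-isometry identity says exactly that the $m$-th forward difference of the sequence $P_n=\prod_{i=1}^{n-1}b_i^2$ vanishes for all $n$, i.e.\ that $(P_n)$ is a polynomial sequence of degree at most $m-1$; telescoping gives $P_n=2^{-\lambda}n^{1-\lambda}(n+1)^{\lambda}$, and your asymptotic argument rigorously excludes polynomiality: the linear growth pins any candidate polynomial down to $2^{-\lambda}n+2^{-\lambda}\lambda$, and the nonvanishing $1/n$-term $-2^{-\lambda}\lambda(1-\lambda)/(2n)$ — nonzero precisely because $\lambda\in(0,1)$ — kills it. The one ingredient you invoke without proof, that the kernel of the $m$-th forward difference consists exactly of polynomial sequences of degree less than $m$, is standard (Newton's forward-difference formula, or a short induction on $m$), so this is not a gap, though it deserves a citation or a two-line proof. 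As for what each approach buys: yours is more elementary (no analytic continuation, no contour integration), treats all $m\ge 1$ uniformly in a single computation, and makes the obstruction transparent, since $P_n$ is a polynomial precisely when $\lambda\in\{0,1\}$, consistent with $\Delta_0(T)=T$ and the Duggal transform $\Delta_1(T)$ being $2$-isometries; the paper's analytic method does not require a closed form for the telescoped products, and it is the same engine the authors redeploy for the generalized mean transform in Theorem \ref{T:Mgene}, where the weights no longer telescope.
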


The case $m=2$ and $\lambda=1/2$ is done by Bethelo -- Jamison \cite{BotelhoJamison2010Al}. We prove that them example works for any $m\ge2$ and any $\lambda\in(0,1)$.

\begin{proof}
Let 
\[a_j=\sqrt{\frac{j+1}{j}},~j\ge1.\]
Then the weighted shift $T$ associated to $(a_n)$ is a $2$-isometry on $l^2$ by Theorem \ref{T:Shift2isoCarac}. Fix $\lambda\in(0,1)$ and let \[b_j=\left(\frac{j+1}j\right)^{(1-\lambda)/2}\left(\frac{j+2}{j+1}\right)^{\lambda/2}.\] Then the $\lambda$-Aluthge transform of $T$ is the weighted shift associated to $(b_n)$. Let $m\ge2$ and $n\ge1$. Then
\begin{align*}
1+\sum_{k=1}^m(-1)^k\binom{m}{k}\prod_{j=0}^{k-1}|b_{j+n}|^2
   &=1+\sum_{k=1}^m(-1)^k\binom{m}{k}\left(\frac{k+n}n\right)^{1-\lambda}\left(\frac{k+n+1}{n+1}\right)^{\lambda}\\
   &=1+\sum_{k=1}^m(-1)^k\binom{m}{k}\left(1+k/n\right)^{1-\lambda}\left(\frac{1+(k+1)/n}{1+1/n}\right)^{\lambda}.
\end{align*}
Let $\Omega_k=\left\{z~:~\Re(z)>\frac{-1}{k+1}\right\}$ and, for $1\le k \le m$, define
\[u_{k,1}(z)=1+kz,~u_{k,2}(z)=1+(k+1)z~\text{and}~u_{k,3}(z)=1+z\quad(z\in\Omega_k).\]
Then $u_{k,1},u_{k,2},u_{k,3}\in Hol(\Omega_k)$ and $\Re(u_{k,l}(z))>0$ for all $z\in\Omega_m$. Let $\Log$ the principal determination of the logarithm and let $v_k\in Hol(\Omega_k)$ be defined by \[v_k=u_{k,1}^{1-\lambda} u_{k,2}^{\lambda}u_{k,3}^{-\lambda}=e^{(1-\lambda)\Log(u_{k,1})+\lambda\Log(u_{k,2})-\lambda\Log(u_{k,3})}.\]
Then $v_k$ verify
\[v_k(x)=\left(1+kx\right)^{1-\lambda}\left(\frac{1+(k+1)x}{1+x}\right)^{\lambda}\quad(x\ge0).\]
And define $f\in Hol(\Omega_m)$ by 
\[f=1+\sum_{k=1}^m(-1)^k\binom{m}{k}v_k.\]
Then, for all $n\ge1$,
\begin{align*}f\left(\frac1n\right)&=1+
\sum_{k=1}^m(-1)^k\binom{m}{k}\left(1+k/n\right)^{1-\lambda}\left(\frac{1+(k+1)/n}{1+1/n}\right)^{\lambda}\\&=1+\sum_{k=1}^m(-1)^k\binom{m}{k}\prod_{j=0}^{k-1}|b_{j+n}|^2.\end{align*}
Suppose that $\Delta_\lambda(T)$ is a $m$-isometry, then $f(1/n)=0$ for all $n\ge1$. Since $0\in\Omega_m$, then $f\equiv0$. In this case, 
\[v_m=(-1)^m\left(-1-\sum_{k=1}^{m-1}(-1)^k\binom{m}{k}v_k\right)\in Hol(\Omega_{m-1})\]
and thus if we define \[g(z)=e^{\lambda\Log(1+(m+1)z)}=v_3(z)(1+z)^{\lambda}(1+mz)^{\lambda-1}\]
then $g\in Hol(\Omega_{m-1})$. \\
Let $\varepsilon>0$ such that $D:=\overline{D}(-1/(m+1),\varepsilon)\subset\Omega_{m-1}$. Then
\begin{align*}
    \frac1{2i\pi}\int_{\partial D}g(z)dz
&=\frac1{2i\pi}\int_{\partial D}(1+(m+1)z)^{\lambda}dz\\
&=\frac{\varepsilon^{1+\lambda}}{2\pi}\int_{-\pi}^\pi e^{i\theta(1+\lambda)}d\theta\\
&=\frac{\varepsilon^{1+\lambda}}{2i\pi(1+\lambda)}(e^{i\pi(1+\lambda)}-e^{-i\pi(1+\lambda)})\\
&=\frac{-\varepsilon^{1+\lambda}}{\pi(1+\lambda)}\sin(\lambda\pi).
\end{align*}
And thus \[\frac1{2i\pi}\int_{\partial D}g(z)dz\neq0~~\text{if}~~\lambda\in(0,1).\] This give a contradiction with $g\in Hol(\Omega_{m-1})$ by the Cauchy-Goursat Theorem. And then, $\Delta_\lambda(T)$ is not a $m$-isometry.
\end{proof}

\section{The mean transform and m-isometries}\label{Section:Mean}
In this section, we prove similar results of the previous section but for the mean transform. The following theorem gives the existence of operators that are not $m$-isometries such that their mean transforms are isometries.

\begin{theorem}\label{T:AntecedentIsometrieMeanTr}
There exists $T\in B(l^2)$ which is not a $m$-isometry for any $m$ such that $M(T)=S$.
\end{theorem}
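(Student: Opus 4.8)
The plan is to mimic the structure of Theorem \ref{T:AntecedantAluthge}, replacing the geometric-mean condition that governs the Aluthge transform by the arithmetic-mean condition that governs the mean transform. By the Proposition computing $M(T)$ for a weighted shift, if $T$ has nonnegative weights $(a_j)$ (so that each $\zeta_j=1$) then $M(T)$ is the weighted shift with weights $\frac{a_j+a_{j+1}}{2}$; hence $M(T)=S$ is equivalent to the recurrence $a_j+a_{j+1}=2$ for all $j\ge1$. First I would solve this: fixing $a_1=c$ forces the two-periodic sequence $a_{2j-1}=c$, $a_{2j}=2-c$. Choosing any $c\in(0,2)$ with $c\neq1$ (for instance $c=1/2$, giving weights $\tfrac12,\tfrac32,\tfrac12,\tfrac32,\dots$) produces a bounded weighted shift $T$ with strictly positive weights and with $M(T)=S$.

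Next I would feed these weights into the characterization of $m$-isometric weighted shifts. Writing $p=c^2$ and $q=(2-c)^2$, the product $\prod_{j=0}^{k-1}a_{j+n}^2$ depends only on the parity of $n$: for $n$ odd it equals $P_k:=p^{\lceil k/2\rceil}q^{\lfloor k/2\rfloor}$, and for $n$ even it equals $Q_k:=q^{\lceil k/2\rceil}p^{\lfloor k/2\rfloor}$. Consequently, by the characterization theorem, $T$ being an $m$-isometry is equivalent to the two scalar identities $1+\sum_{k=1}^m(-1)^k\binom{m}{k}P_k=0$ (from the odd-index basis vectors) and $1+\sum_{k=1}^m(-1)^k\binom{m}{k}Q_k=0$ (from the even-index ones) holding simultaneously.

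The key step is to subtract these two identities. For even $k$ one has $P_k=Q_k$, while for odd $k$ a direct computation gives $P_k=p\,(pq)^{(k-1)/2}$ and $Q_k=q\,(pq)^{(k-1)/2}$, hence $P_k-Q_k=(p-q)(pq)^{(k-1)/2}$. The difference of the two identities therefore collapses to $(p-q)\sum_{k\ \mathrm{odd}}\binom{m}{k}(pq)^{(k-1)/2}=0$. Since $c\neq1$ forces $p\neq q$, and since every summand $\binom{m}{k}(pq)^{(k-1)/2}$ is strictly positive (the sum being nonempty because $k=1$ always occurs), the left-hand side is nonzero, a contradiction. Thus $T$ is not an $m$-isometry, for any $m\ge1$.

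I expect the only delicate point to be the parity bookkeeping for $P_k$ and $Q_k$ and the verification of the clean factorization $P_k-Q_k=(p-q)(pq)^{(k-1)/2}$ for odd $k$; once this is established the contradiction is immediate. It is worth emphasizing that, unlike in Theorem \ref{T:Aluthge}, the subtraction argument rules out \emph{all} values of $m$ at once, so no Cauchy--Goursat or complex-analytic machinery is needed here.
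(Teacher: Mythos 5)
Your proposal is correct, and the counterexample itself is exactly the paper's: a $2$-periodic weight sequence with consecutive weights summing to $2$ (the paper takes $1/2,3/2,1/2,3/2,\dots$, your $c=1/2$ instance up to an indexing shift). Where you genuinely diverge is in the proof that $T$ is not an $m$-isometry. The paper observes that $T^2=\tfrac34 S^2$, hence $\rho(T)=\sqrt{3}/2<1$, and invokes the Agler--Stankus result (Lemma 1.21 in \cite{AglerStankus1995I}) that the spectrum of an $m$-isometry lies in the unit circle or fills the closed unit disk, so its spectral radius must be $1$. You instead run the weighted-shift characterization from Section \ref{Section:WeightedShifts} directly: with $p=c^2$, $q=(2-c)^2$, the $m$-isometry identity at odd and even $n$ gives two scalar equations whose difference collapses, for odd $k$, to $(p-q)(pq)^{(k-1)/2}$ terms; your parity bookkeeping here is right, and since $p\neq q$ when $c\neq1$ and every surviving term is strictly positive (the $k=1$ term always being present), the difference cannot vanish. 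Both arguments are valid and handle all $m\ge1$ at once. Your route is more self-contained --- it needs nothing beyond the characterization theorem already proved in the paper, no spectral theory --- and it works uniformly for the whole family $c\in(0,2)$, $c\neq1$. The paper's route is shorter, and the explicit spectral radius computation is not wasted: it is reused immediately afterwards to deduce the corollary that the mean transform does not preserve similarity to a contraction or power-boundedness, a payoff your algebraic argument does not provide.
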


\begin{proof}
Let $T$ be the weighted shift associated to $(a_j)$ with 
\[a_j=\begin{cases}
\frac12&\text{if $j$ is even,}\\
\frac32&\text{if $j$ is odd.}\\
\end{cases}\]
Then $M(T)$ is the weighted shift associated to $(m_j)$ with
\[m_j=\frac{a_j+a_{j+1}}{2}=\frac{\frac12+\frac32}{2}=1.\]
We deduce that $M(T)= S$. Now, we can verify that $T$ is not a $m$-isometry for any $m$. To do that, we can remark that $T^2=\frac34 S^2$. Then 
\[\rho(T)^2=\rho(T^2)=\frac34\implies \rho(T)=\frac{\sqrt3}2.\]
But the spectrum of a $m$-isometry is either included in the unit circle or is the entire closed unit disk  (See Lemma 1.21 in \cite{AglerStankus1995I}). In particular, its spectral radius is $1$ so $T$ can not be an $m$-isometry.
\end{proof}

The counterexample in the proof allows us to determine other properties which are not preserved by the mean transform, in particular the spectral radius is not preserved (proved in Example 4.1 in \cite{ChabbabiCurtoMbekhta2019}) and this remark allow us to deduce the following result.

\begin{corollary}
There exists an operator $T\in B(l^2)$ which is similar to a contraction but such that $M(T)$ is not a power bounded operator. 
\end{corollary}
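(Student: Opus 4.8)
The plan is to recycle the counterexample of Theorem \ref{T:AntecedentIsometrieMeanTr}, rescaled by a constant so that the mean transform becomes a scalar multiple of the shift with factor strictly larger than $1$. Concretely, fix $\alpha\in(1,2/\sqrt3\,]$ and let $T$ be the weighted shift associated to $a_j=\alpha/2$ for $j$ even and $a_j=3\alpha/2$ for $j$ odd; equivalently $T=\alpha T_0$, where $T_0$ is the operator of Theorem \ref{T:AntecedentIsometrieMeanTr}. Since the mean transform is positively homogeneous, namely $M(\alpha T_0)=\alpha M(T_0)$ because $|\alpha T_0|=\alpha|T_0|$ while the associated partial isometry is unchanged, the computation carried out there gives $M(T)=\alpha S$. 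As $\|M(T)^n\|=\alpha^n\to\infty$ when $\alpha>1$, the operator $M(T)$ is not power bounded. This is the easy half.

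The substantive part is to check that $T$ itself is similar to a contraction: note that $\rho(T)\le1$ alone does not suffice, so I cannot merely invoke the spectral radius computation. For a weighted shift with positive weights a convenient tool is diagonal conjugation: if $D$ is the diagonal operator $De_j=d_je_j$ with $0<\inf_j d_j\le\sup_j d_j<\infty$, then $D^{-1}TD$ is again a weighted shift, this time with weights $b_j=a_j\,d_j/d_{j+1}$. The goal is to choose such a sequence $(d_j)$ making every $b_j\le1$, that is $d_{j+1}/d_j\ge a_j$, while keeping $(\log d_j)$ bounded above and below.

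I would build $(d_j)$ by prescribing the jumps $\log d_{j+1}-\log d_j$ so that the forced increases on the odd steps, where $a_j=3\alpha/2>1$, are exactly undone on the even steps. Taking $\log d_{j+1}-\log d_j=\log(3\alpha/2)$ for $j$ odd and $\log d_{j+1}-\log d_j=-\log(3\alpha/2)$ for $j$ even makes $(\log d_j)$ two-periodic, hence $(d_j)$ bounded away from $0$ and $\infty$. The even-step choice is admissible, i.e. it respects $d_{j+1}/d_j\ge a_j$, precisely when $-\log(3\alpha/2)\ge\log(\alpha/2)$, that is $3\alpha^2/4\le1$, which is exactly the constraint $\alpha\le2/\sqrt3$. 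With these jumps one computes $b_j=1$ on odd steps and $b_j=3\alpha^2/4\le1$ on even steps, so $D^{-1}TD$ is a contraction and $T$ is similar to a contraction.

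Collecting the two halves, any $\alpha\in(1,2/\sqrt3\,]$ produces an operator $T\in B(l^2)$ that is similar to a contraction while $M(T)=\alpha S$ is not power bounded, proving the corollary. The point demanding care, and the main obstacle, is the boundedness of the conjugating diagonal $D$: it is the reason the naive choice $d_{j+1}/d_j=a_j$, which would conjugate $T$ to $S$, fails when $\alpha<2/\sqrt3$, since then the products $\prod_j a_j$ tend to $0$ and $D^{-1}$ becomes unbounded; the periodic construction above circumvents this. Choosing the endpoint $\alpha=2/\sqrt3$ is marginally cleaner, as there the products over consecutive pairs equal $1$ and $T$ is similar to $S$ itself, an isometry.
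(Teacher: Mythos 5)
Your proof is correct, and it reaches the conclusion by a genuinely different route than the paper for the substantive half. Both arguments use the same counterexample, namely $\alpha T_0$ with $T_0$ the weighted shift of Theorem \ref{T:AntecedentIsometrieMeanTr}, and both get non-power-boundedness of $M(\alpha T_0)=\alpha S$ immediately (you compute $\|\alpha^n S^n\|=\alpha^n$ directly; the paper argues via $\rho(\alpha S)=\alpha>1$). The difference is in proving that $\alpha T_0$ is similar to a contraction: the paper notes $\rho(\alpha T_0)=\alpha\sqrt3/2<1$ for $\alpha\in(1,2/\sqrt3)$ and invokes Rota's theorem \cite{Rota1960}, whereas you construct an explicit bounded diagonal $D$ with $D^{-1}(\alpha T_0)D$ a weighted shift of weights $\le 1$; your two-periodic choice of $(\log d_j)$ is exactly what keeps $D$ and $D^{-1}$ bounded, and your computation ($b_j=1$ on odd steps, $b_j=3\alpha^2/4$ on even steps, admissible iff $\alpha\le 2/\sqrt3$) is correct. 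What each approach buys: the paper's argument is shorter and works for any operator with spectral radius below $1$, with no weighted-shift structure needed, but it is not self-contained (it cites Rota's theorem) and it genuinely requires $\rho<1$, hence the open interval $\alpha\in(1,2/\sqrt3)$. Your argument is elementary and self-contained, exploits the shift structure, and also covers the endpoint $\alpha=2/\sqrt3$, where $\rho(\alpha T_0)=1$ makes Rota's theorem inapplicable yet your conjugation exhibits $\alpha T_0$ as similar to the isometry $S$ itself.
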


\begin{remark} By the von Neumann inequality, an operator similar to a contraction is completely polynomially bounded (See \cite{vonNeumann1951} and \cite{Paulsen2002} for details).
Then a direct consequence of this corollary is that the mean transform do not preserve power bounded, polynomially bounded and completely polynomially bounded operators.
\end{remark}

\begin{proof}
Let $T$ like in the proof of Theorem \ref{T:AntecedentIsometrieMeanTr} and $\alpha\in(1,2/\sqrt3)$.
Then \[\rho(\alpha T)=\alpha\sqrt3/2<1\] and by the Rota's theorem (See \cite{Rota1960} or Corollary 9.14 in \cite{Paulsen2002}), $\alpha T$ is similar to a contraction.
But \[\rho(M(\alpha T))=\rho(\alpha M(T))=\alpha\rho(M(T))=\alpha>1. \] 
Then \[\|M(T)^n\|\to\infty~~\text{when }n\to\infty,\]
and thus $M(T)$ is not power bounded.
\end{proof}
Let us consider $T$ the counter-example given in the proof of Theorem \ref{T:Aluthge}, i.e. the weighted shift associated to the sequence defined by
\[a_j=\sqrt{\frac{j+1}j},~j\ge1.\]
We can remark that the mean transform of $T$ will not be a $2$-isometry. Indeed, since $M(T)$is the wighted shift associated to the sequence $m_j=(a_j+a_{j+1})/2$, we have that 
\begin{align*}
\|M(T)^2e_1\|^2-2\|M(T)e_1\|^2+1
&=m_1^2m_2^2-2m_1^2+1\\
&=\left(\frac{2+\sqrt3}{2\sqrt2}\right)^2\left(\frac{3+2\sqrt2}{2\sqrt6}\right)^2-2\left(\frac{2+\sqrt3}{2\sqrt2}\right)^2+1\\
&=\frac{48\sqrt6+68\sqrt3-108\sqrt2-337}{184}\\
&<\frac{48\times3+68\times2-108-337}{184}=-\frac{165}{184}<0.
\end{align*}
This operator will also give a counter-example for the case of the generalized mean transform, i.e. we have the following result.

\begin{theorem}\label{T:Mgene}
Let $\lambda\in[0,1]$. There exists a 2-isometry $T\in B(H)$ such that $M_\lambda(T)$ is not a 2-isometry.
\end{theorem}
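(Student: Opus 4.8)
The plan is to reuse the $2$-isometry $T$ from the proof of Theorem~\ref{T:Aluthge}, i.e. the weighted shift with weights $a_j=\sqrt{(j+1)/j}$, and to show that $M_\lambda(T)$ is never a $2$-isometry. Since $M_\lambda=M_{1-\lambda}$ and $M_0=M$, the endpoints $\lambda\in\{0,1\}$ are already settled by the computation immediately preceding this theorem, which shows $M(T)$ is not a $2$-isometry; so I may assume $\lambda\in(0,1)$. By the corollary describing $M_\lambda$ of a positive weighted shift (Section~\ref{Section:WeightedShifts}), $M_\lambda(T)$ is the weighted shift with weights $m_{\lambda,j}=\tfrac12\bigl(a_j^\lambda a_{j+1}^{1-\lambda}+a_j^{1-\lambda}a_{j+1}^\lambda\bigr)$. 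Writing $z=1/n$ one has $a_n^2=1+z$ and $a_{n+1}^2=(1+2z)/(1+z)$, and expanding the square (the cross term being $a_na_{n+1}=(1+2z)^{1/2}$) gives $m_{\lambda,n}^2=\psi_\lambda(z)$, where
\[\psi_\lambda(z)=\tfrac14\Bigl((1+z)^{2\lambda-1}(1+2z)^{1-\lambda}+2(1+2z)^{1/2}+(1+z)^{1-2\lambda}(1+2z)^{\lambda}\Bigr)\]
is holomorphic, through the principal branches, on a thin complex neighbourhood of the real ray $(-1/2,\infty)$ (its only branch points being $z=-1$ and $z=-1/2$). Since $a_1,a_2>1$ we have $m_{\lambda,1}>1$, so $M_\lambda(T)\neq S$ is not an isometry; hence by Theorem~\ref{T:Shift2isoCarac} it can be a $2$-isometry only if $m_{\lambda,n}^2 m_{\lambda,n+1}^2-2m_{\lambda,n}^2+1=0$ for all $n\ge1$.

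Assume this for contradiction. Because $1/(n+1)=z/(1+z)$, one has $m_{\lambda,n+1}^2=\psi_\lambda\bigl(z/(1+z)\bigr)$, so I would set
\[\Phi_\lambda(z)=\psi_\lambda(z)\,\psi_\lambda\!\Bigl(\tfrac{z}{1+z}\Bigr)-2\psi_\lambda(z)+1,\]
which is holomorphic on a thin simply connected neighbourhood $\Omega$ of the interval $(-1/3,\infty)$: there $\psi_\lambda(z)$ is holomorphic, and $z\mapsto z/(1+z)$ maps $(-1/3,\infty)$ into $(-1/2,1)$, so $\psi_\lambda(z/(1+z))$ is holomorphic as well, with the branch point at $z=-1/3$ sitting on the boundary. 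The hypothesis gives $\Phi_\lambda(1/n)=0$ for all $n$, and since $1/n\to0$ accumulates in $\Omega$, the identity theorem yields $\Phi_\lambda\equiv0$ on $\Omega$. As $\psi_\lambda$ is holomorphic and, being a sum of positive terms on the reals there, nonvanishing near $z_0=-1/3$, dividing the identity $\psi_\lambda(z)\psi_\lambda(z/(1+z))=2\psi_\lambda(z)-1$ by $\psi_\lambda(z)$ shows that
\[\psi_\lambda\!\Bigl(\tfrac{z}{1+z}\Bigr)=2-\frac{1}{\psi_\lambda(z)}\]
extends holomorphically across $z_0=-1/3$.

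The crux, and the main obstacle, is to contradict this by proving that $\psi_\lambda(z/(1+z))$ genuinely branches at $z_0=-1/3$. The Möbius map $z\mapsto w=z/(1+z)$ is a local biholomorphism near $z_0$ sending it to $w_0=-1/2$, so it suffices that $\psi_\lambda(w)$ branch at $w_0$. Near $w_0$, $1+w=\tfrac12+(w-w_0)$ is holomorphic and nonzero while $1+2w=2(w-w_0)$, so with $\tau=w-w_0$,
\[4\psi_\lambda(w)=c_1(w)\,\tau^{1-\lambda}+c_2\,\tau^{1/2}+c_3(w)\,\tau^{\lambda},\]
with $c_1,c_3$ holomorphic, $c_1(w_0),c_3(w_0)\neq0$, and $c_2=2^{3/2}\neq0$. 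The leading Puiseux term is the one of smallest exponent $\beta=\min\{\lambda,1-\lambda,1/2\}\in(0,1)$; its coefficient is nonzero (a single $c_i(w_0)$ when $\lambda\neq1/2$, and the merged value $2^{1/2}+2^{3/2}+2^{1/2}\neq0$ when $\lambda=1/2$), and $\beta$ is not an integer, so $\psi_\lambda(w)$ is not holomorphic at $w_0$. Equivalently, a small circular integral of $\psi_\lambda(w)$ about $w_0$ is, to leading order in the radius, a nonzero multiple of $\sin(\pi\beta)$, exactly as in the contour computation in the proof of Theorem~\ref{T:Aluthge}. This contradicts the holomorphic extension just obtained, so $M_\lambda(T)$ is not a $2$-isometry. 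The two points demanding care are that the three fractional powers cannot conspire to cancel the leading term (immediate, since the minimal exponent is attained with a nonzero coefficient) and that $\Omega$ really permits continuing $\Phi_\lambda$ from $0$ up to the boundary branch point $z_0=-1/3$.
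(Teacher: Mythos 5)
Your proof is correct, and it follows the paper only in its first half: the same $2$-isometry $T$, the same passage to holomorphic functions of $z=1/n$ via principal branches, and the same identity-theorem step turning the countably many conditions at the points $1/n$ into an identity near $[0,\infty)$ (your $\Phi_\lambda$ plays the role of the paper's $h$, via $g=4\psi_\lambda$, $h=16\Phi_\lambda$). The contradiction, however, is reached by a genuinely different mechanism. The paper stays on the real half-line: from $h\equiv 0$ and $g(x)\to+\infty$ as $x\to+\infty$ it deduces $g(1)=8$, then excludes this by elementary calculus on the auxiliary function $u(s)$. You instead push the identity to the boundary branch point $z_0=-1/3$: since $\psi_\lambda$ is holomorphic and nonvanishing there, $\Phi_\lambda\equiv0$ would extend $z\mapsto\psi_\lambda(z/(1+z))$ holomorphically across $z_0$, which you refute through the Puiseux expansion of $\psi_\lambda$ at $w_0=-1/2$ and the small-circle integral --- precisely the Cauchy--Goursat mechanism of the paper's proof of Theorem \ref{T:Aluthge}, transported to the mean transform; your check that the three competing exponents cannot cancel (a single minimal exponent when $\lambda\neq1/2$, merged coefficient $2^{1/2}+2^{3/2}+2^{1/2}=4\sqrt2\neq0$ when $\lambda=1/2$) is exactly the point that makes this airtight. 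Each route buys something. Yours requires no numerical evaluation, which makes it more robust: note that the paper's displayed cross term $2(1+1/j)(1+2/j)^{1/2}$ is a slip for $2(1+2/j)^{1/2}$ (your $\psi_\lambda$ is the correct function), so the paper's values $u(1/2)=6\sqrt3$ and $u(0)=7/2+4\sqrt3$ should read $4\sqrt3$ and $7/2+2\sqrt3$; its conclusion $g(1)\neq8$ survives, but with the inequality reversed, $u(\lambda)<8$. The paper's route, in exchange, is more elementary and treats all $\lambda\in[0,1]$ at once, whereas your leading-exponent argument needs $\lambda\in(0,1)$ and you settle the endpoints by quoting the $M(T)$ computation (legitimate, since $M_0=M_1=M$). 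One small citation point: the recurrence $m_{\lambda,n}^2m_{\lambda,n+1}^2-2m_{\lambda,n}^2+1=0$ is the $m=2$ case of the general characterization of $m$-isometric weighted shifts in Section \ref{Section:WeightedShifts}, valid for isometries as well, so your preliminary step showing $M_\lambda(T)\neq S$ and the appeal to Theorem \ref{T:Shift2isoCarac} are not actually needed.
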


\begin{proof}
Let $T$ be the weighted shift associated to the sequence $(a_j)$ with 
\[a_j=\sqrt{\frac{j+1}{j}},~j\ge1.\]
Then $M_\lambda(T)$ is the weighted shift associated to $(m_{\lambda,j})$ with 
\begin{align*}
m_{\lambda,j}
&=\frac12\Big(a_j^\lambda a_{j+1}^{1-\lambda}+a_j^{1-\lambda}a_{j+1}^\lambda\Big)\\
&=\frac12\left(\left(\frac{j+1}j\right)^{\lambda/2} \left(\frac{j+2}{j+1}\right)^{(1-\lambda)/2}+\left(\frac{j+1}j\right)^{(1-\lambda)/2}\left(\frac{j+2}{j+1}\right)^{\lambda/2}\right).
\end{align*}
It is not easy to verify that $M_\lambda(T)$ cannot be written as in Theorem \ref{T:Shift2isoCarac}, then we do a direct proof.
We know that $M_\lambda(T)$ is a 2-isometry if and only if for all $j\ge1$, $m_{\lambda,j+1}^2m_{\lambda,j}^2-2m_{\lambda,j}^2+1=0$ and we have
\begin{align*}
4m_{\lambda,j}^2
&=\left(\left(\frac{j+1}j\right)^{\lambda/2} \left(\frac{j+2}{j+1}\right)^{(1-\lambda)/2}+\left(\frac{j+1}j\right)^{(1-\lambda)/2}\left(\frac{j+2}{j+1}\right)^{\lambda/2}\right)^2\\
&=\left(\frac{j+1}j\right)^{\lambda} \left(\frac{j+2}{j+1}\right)^{1-\lambda}+\left(\frac{j+1}j\right)^{1-\lambda}\left(\frac{j+2}{j+1}\right)^{\lambda}\\
    &~\hspace{6cm}+2\left(\frac{j+1}j\right)^{1/2} \left(\frac{j+2}{j+1}\right)^{1/2}\\
&=\left(1+\frac1j\right)^{2\lambda-1}\left(1+2\frac1j\right)^{1-\lambda}+\left(1+\frac1j\right)^{1-2\lambda}\left(1+2\frac1j\right)^{\lambda}\\
    &~\hspace{6cm}+2\left(1+\frac1j\right)\left(1+2\frac1j\right)^{1/2}.
\end{align*}
With the help of the principal holomorphic determination of the logatithm on $\mathbb C\backslash\mathbb R^-$, we define $g$, the holomorphic function on $\Omega_1:=\mathbb C\backslash(-\infty,-1/2]$, by 
\[g(z):=\left(1+z\right)^{2\lambda-1}\left(1+2z\right)^{1-\lambda}+\left(1+z\right)^{1-2\lambda}\left(1+2z\right)^{\lambda}+2\left(1+z\right)\left(1+2z\right)^{1/2}.\]
We have that $g(1/j)=4m_{\lambda,j}^2$ and $g\left(\frac{1/j}{1+1/j}\right)=4m_{\lambda,j+1}^2$. Then,
let $\Omega_2:=\{z\in\mathbb C:z/(1+z)\in\Omega_1\}$, $\Omega=\Omega_1\cap\Omega_2$ and let $h$ be the holomorphic function on $\Omega$ defined by
\[h(z)=g(z)g\left(\frac{z}{1+z}\right)-8g(z)+16,~z\in\Omega.\]
Suppose that $M_\lambda(T)$ is a 2-isometry. Then for all $n\ge1$, we have that $h(1/n)=0$. Since $[0,+\infty)\subset \Omega$, by the isolated zero principle, $h=0$ on $\Omega$ and then on $[0,+\infty)$.
Moreover, we have that $\lim\limits_{x\to+\infty}g(x)=+\infty$ then if the equality 
\[g(x)\left(8-g\left(\frac{x}{1+x}\right)\right)=16\]
is true for all $x\ge0$, we must have that \[g(1)=\lim\limits_{x\to+\infty}g\left(\frac{x}{1+x}\right)=8.\] So let
\[u(s)=2^{2s-1}3^{1-s}+2^{1-2s}3^{s}+4\sqrt3,~s\in[0,1].\]
Then 
\[u'(s)=2^{-1 - 2 s} 3^{-s)} (-4\times 9^s + 3\times16^s) \Log(4/3),\]
which is negative on $(0,1/2)$ and positive on $(1/2,1)$.
Then 
\[8<6\sqrt3=u(1/2)\le u(\lambda)=g(1)\le u(0)=\frac72+4\sqrt3.\]
In particular, $g(1)\neq8$ and it gives a contradiction so $M_\lambda(T)$ is not a 2-isometry.\\~
\end{proof}
\section{Characterization of $2$-isometric weighted shifts with a 2-isometric Aluthge or mean transform}\label{section:T&MeanOrAluthge}
In this section, we consider $2$-isometric weighted shifts such that their Aluthge or mean transform are $2$-isometries. We remark that such operators are necessarily isometries.

\begin{theorem}\label{T:T&Delta(T)2iso}
Let $T$ be the weighted shift associated to a sequence $(a_n)$. If $T$ and $\Delta(T)$ are 2-isometries then $T$ is an isometry and in particular $T=\Delta(T)=M(T)$.
\end{theorem}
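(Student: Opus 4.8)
The plan is to argue by contradiction, reducing everything to the characterization of $2$-isometric weighted shifts in Theorem~\ref{T:Shift2isoCarac}. Without loss of generality I take $a_j\ge0$. Suppose $T$ is a $2$-isometry that is \emph{not} an isometry. Then Theorem~\ref{T:Shift2isoCarac} supplies a parameter $a>-1$ with $a_j^2=\frac{j+1+a}{j+a}$ for all $j\ge1$. The first step is to read off the weights of $\Delta(T)=\Delta_{1/2}(T)$ from the Proposition computing the $\lambda$-Aluthge transform of a weighted shift: with $\lambda=1/2$ its weights are $b_j=\sqrt{a_j a_{j+1}}$, so that
\[
b_j^2=a_j a_{j+1}=\sqrt{\frac{j+2+a}{j+a}}.
\]

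Next I would apply Theorem~\ref{T:Shift2isoCarac} a second time, now to $\Delta(T)$. First observe that $b_j^2=\sqrt{(j+2+a)/(j+a)}\ne1$ for every $j$ (equality would force $2=0$), so $\Delta(T)$ is not an isometry; since it is assumed to be a $2$-isometry, the characterization forces a second parameter $a'>-1$ with $b_j^2=\frac{j+1+a'}{j+a'}$. Equating the two expressions for $b_j^2$ and squaring gives
\[
\frac{j+2+a}{j+a}=\frac{(j+1+a')^2}{(j+a')^2},\qquad j\ge1,
\]
hence, clearing denominators, the polynomial identity $(j+2+a)(j+a')^2=(j+a)(j+1+a')^2$ holds for all $j\ge1$ and therefore identically in $j$.

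The heart of the argument is then a comparison of coefficients in this identity. The cubic and quadratic coefficients match automatically; the linear coefficient forces $a'=a+\tfrac12$, and substituting this into the constant coefficient collapses to $\tfrac12=0$, a contradiction. I expect this coefficient bookkeeping to be the only real obstacle---not conceptually hard, but it is where the incompatibility between the ``square-root'' shape of $b_j^2$ and the admissible ``$(j+1+a')/(j+a')$'' shape becomes visible. The contradiction shows that no non-isometric $2$-isometry $T$ can have a $2$-isometric Aluthge transform, so $T$ must be an isometry. Finally, when $T$ is an isometry one has $|T|=I$ and $V=T$, whence $\Delta(T)=|T|^{1/2}V|T|^{1/2}=T$ and $M(T)=\tfrac12(|T|V+V|T|)=T$, giving $T=\Delta(T)=M(T)$.
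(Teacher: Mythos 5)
Your proposal is correct and follows essentially the same route as the paper: apply Theorem~\ref{T:Shift2isoCarac} to both $T$ and $\Delta(T)$, equate the two expressions for the squared Aluthge weights, and derive the contradiction $\tfrac12=0$ by comparing polynomial coefficients (the paper likewise finds $b=a+\tfrac12$ from the linear term and then the constant term fails). The only cosmetic differences are your slightly more direct argument that $\Delta(T)$ is not an isometry and your explicit verification of the final clause $T=\Delta(T)=M(T)$, which the paper leaves implicit.
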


\begin{proof}
Without loss of generality, suppose that $a_j>0$. Suppose on the contrary that $T$ is not an isometry i.e. $a_j\neq1$. Then there exists $a>-1$ such that 
\[a_j=\sqrt{\frac{j+1+a}{j+a}},~j\ge1.\]
If $\Delta(T)$ is an isometry, then $\Delta(T)=S$. In this case, we should have that $a_ja_{j+1}=1$ but the form of $a_j$ implies that $a_j>1$ for all $j\ge1$. Then there exists $b>-1$ such that $\Delta(T)$ is the weighted shift associated to the sequence defined by
\[\sqrt{a_ja_{j+1}}=\sqrt{\frac{j+1+b}{j+b}},~j\ge1.\]
This relation implies that, for all $j\ge1$,
\[\left(\frac{j+1+b}{j+b}\right)^2=\frac{j+2+a}{j+a}.\]
This equality is true if and only if for all $j\ge1$, 
\[(j+1+b)^2(j+a)=(j+b)^2(j+2+a),\]
which is equivalent to
\begin{multline*}
j^3+(a+2(b+1))j^2+(b+1)(b+1+2a)j+a(b+1)^2\\=j^3+(a+2+2b)j^2+(2a+4+b)bj+(a+2)b^2.
\end{multline*}
To have this equality on these polynomials, we need to have the equality of the coefficients and then 
\[\begin{cases}
a(b+1)^2=(a+2)b^2,
\\(b+1)(b+1+2a)=(2a+4+b)b,
\end{cases}\iff
\begin{cases}
2b^2=2ab+a,\\2b+2a+1=4b.
\end{cases}\]
With the second condition, we deduce that $b=a+1/2$. Then the first condition becomes $2a^2+2a+1/2=2a^2+a+a$. Finally, since $1/2\neq0$, these conditions are not verified and then $T$ has to be an isometry.
\end{proof}

The following theorem gives the same result in the case of the mean transform.

\begin{theorem}\label{T:T&M(T)2iso}
Let $T$ be the weighted shift associated to a sequence $(a_j)$. If $T$ and $M(T)$ are 2-isometries then $T$ is an isometry and in particular $T=\Delta(T)=M(T)$.
\end{theorem}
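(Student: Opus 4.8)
The plan is to follow the scheme of Theorem~\ref{T:T&Delta(T)2iso}, but to replace its polynomial-identity step (available for the Aluthge transform only because $\sqrt{a_ja_{j+1}}$ collapses into a rational weight) by the holomorphic-extension argument used in Theorem~\ref{T:Mgene}. As there, I would normalise to $a_j>0$ and argue by contradiction. If $T$ is a $2$-isometry that is not an isometry, Theorem~\ref{T:Shift2isoCarac} produces $a>-1$ with $a_j=\sqrt{(j+1+a)/(j+a)}$ for every $j\ge1$. Then $M(T)$ is the weighted shift with weights $m_j=(a_j+a_{j+1})/2$, and the hypothesis that $M(T)$ is a $2$-isometry is, by the weighted-shift characterisation applied to $M(T)$ (the analogue of \eqref{Eq:2iso}), equivalent to
\[ m_j^2m_{j+1}^2-2m_j^2+1=0,\qquad j\ge1. \]

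The computational heart is to encode these weights in a single holomorphic function. Setting $w=1/(j+a)$ gives $a_j^2=1+w$ and $a_{j+1}^2=(1+2w)/(1+w)$, whence
\[ 4m_j^2=a_j^2+a_{j+1}^2+2a_ja_{j+1}=\frac{2+4w+w^2}{1+w}+2\sqrt{1+2w}=:\phi(w); \]
moreover, replacing $j$ by $j+1$ sends $w$ to $w/(1+w)$, so $4m_{j+1}^2=\phi\!\left(w/(1+w)\right)$. Multiplying the $2$-isometry relation above by $16$ turns it into
\[ \phi(w)\,\phi\!\left(\frac{w}{1+w}\right)-8\,\phi(w)+16=0\qquad\text{at } w=\tfrac1{j+a},\ j\ge1. \]
I would then define $h(z)=\phi(z)\,\phi(z/(1+z))-8\,\phi(z)+16$ on the connected domain $\Omega$ built in the proof of Theorem~\ref{T:Mgene} by removing the branch cut of the square roots and the pole at $z=-1$; note that $\Omega\supseteq[0,+\infty)$ and that $0$ is interior to $\Omega$.

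Since the points $w_j=1/(j+a)$ are distinct and converge to $0\in\Omega$, the isolated-zeros principle forces $h\equiv0$ on $\Omega$, and in particular on $[0,+\infty)$. To contradict this I would let $x\to+\infty$ along the positive reals: then $\phi(x)\to+\infty$, while $x/(1+x)\to1$ gives $\phi(x/(1+x))\to\phi(1)=\tfrac72+2\sqrt3$. Consequently
\[ h(x)=\phi(x)\Bigl(\phi\!\left(\tfrac{x}{1+x}\right)-8\Bigr)+16\longrightarrow-\infty, \]
because $\phi(1)-8=2\sqrt3-\tfrac92<0$; this contradicts $h\equiv0$. Hence $T$ must be an isometry, so $T=S$ and therefore $T=\Delta(T)=M(T)$.

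The main obstacle, relative to the Aluthge case, is precisely the sum of square roots in $m_j$: it does not telescope into a rational weight, so one cannot match $M(T)$ to a second parameter $b$ through Theorem~\ref{T:Shift2isoCarac} and equate polynomials as in Theorem~\ref{T:T&Delta(T)2iso}. The holomorphic-extension device circumvents this by converting the infinitely many discrete equations into a single analytic identity, from which the asymptotics at $+\infty$ extract the one forbidden value $\phi(1)=8$. The remaining points—connectedness of $\Omega$ and the accumulation of the $w_j$ at an interior point—are routine and are handled exactly as in Theorem~\ref{T:Mgene}.
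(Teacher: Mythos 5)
Your proof is correct, but it takes a genuinely different route from the paper's. The paper stays entirely algebraic: since $a_j>1$ forces $m_j=(a_j+a_{j+1})/2>1$, the transform $M(T)$ is itself a non-isometric $2$-isometric weighted shift, so Theorem~\ref{T:Shift2isoCarac} applies a \emph{second} time and yields $b>-1$ with $m_j=\sqrt{(j+1+b)/(j+b)}$; because $4m_j^2$ contains only one square-root term, namely $2a_ja_{j+1}=2\sqrt{(j+2+a)/(j+a)}$, that term can be isolated and squared away, giving an identity between polynomials in $j$, hence in a complex variable $z$; setting $z=-a$ forces $b=a$, and the resulting identity $(z+a)^2\bigl(2z^2+4(a+1)z+2a^2+4a+3\bigr)^2=4(z+a)^3(z+a+1)^2(z+a+2)$ is impossible because the quadratic factor has discriminant $-8<0$, whereas all roots of the right-hand side are real. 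So your stated ``main obstacle'' --- that the sum of square roots prevents matching $M(T)$ to a second parameter $b$ and equating polynomials --- is not actually an obstacle, and the paper does exactly what you ruled out. Your alternative is nonetheless sound: you transplant the holomorphic-extension device of Theorem~\ref{T:Mgene} (carried out there only for $a=0$) to arbitrary $a>-1$ via the substitution $w=1/(j+a)$, and the details check out: $a_j^2=1+w$, $a_{j+1}^2=(1+2w)/(1+w)$, $4m_j^2=\phi(w)$ with $\phi(w)=(2+4w+w^2)/(1+w)+2\sqrt{1+2w}$, the shift $j\mapsto j+1$ corresponds to $w\mapsto w/(1+w)$, the zeros $1/(j+a)$ of $h$ accumulate at the interior point $0$, and finally $\phi(1)=\frac{7}{2}+2\sqrt{3}<8$ makes $h(x)\to-\infty$ as $x\to+\infty$, contradicting $h\equiv 0$ on $[0,+\infty)$. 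Comparing the two approaches: the paper's is elementary (discriminants, no complex analysis), while yours invokes Theorem~\ref{T:Shift2isoCarac} once instead of twice, avoids the heavy polynomial bookkeeping, and visibly generalizes in the direction of $M_\lambda$, in effect merging this theorem with Theorem~\ref{T:Mgene} into a single argument.
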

\begin{proof}Without loss of generality, suppose that $a_j>0$.
Suppose on the contrary that $T$ is not an isometry i.e. $a_j\neq1$. Then there exists $a>-1$ such that 
\[a_j=\sqrt{\frac{j+1+a}{j+a}},~j\ge1.\]
Since $a_j>1$, then $a_j+a_{j+1}>2$ and thus $M(T)$ is not an isometry. Then there exists $b>-1$ such that $M(T)$ is the weighted shift associated to the sequence 
\[\frac{a_j+a_{j+1}}2=\sqrt{\frac{j+1+b}{j+b}},~j\ge1.\]
From this equality, we deduce that 
\[\frac{j+1+a}{j+a}+\frac{j+2+a}{j+a+1}+2\sqrt{\frac{j+2+a}{j+a}}=4\frac{j+1+b}{j+b},\]
and then
\[\left(\frac{j+1+a}{j+a}+\frac{j+2+a}{j+a+1}-4\frac{j+1+b}{j+b}\right)^2=4\frac{j+2+a}{j+a}.\]
Then we have
\begin{multline*}
\Big((j+b)(j+1+a)^2+(j+a)(j+b)(j+2+a)-4(j+a)(j+a+1)(j+b+1)\Big)^2
\\=4(j+a+2)(j+a)(j+a+1)^2(j+b)^2.
\end{multline*}
This condition gives an equality between two polynomials and in particular, we can replace $j$ by $z\in\mathbb C$. If we choose $z=-a$, then this equality is that $a=b$. Then this equality gives
\[(z+a)^2\Big(2z^2+4(a+1)z+2a^2+4a+3\Big)^2=4(z+a)^3(z+a+1)^2(z+a+2).\]
If this equality is true then the zeroes of $2z^2+4(a+1)z+2a^2+4a+3$ are real. But the discriminant of this polynomial is $16(a+1)^2-4\times 2\times(2a^2+4a+3)=-8<0$. Then this inequality gives a contradiction. Thus finally, $T$ has to be an isometry.
\end{proof}
\begin{remark}
Theorems \ref{T:T&Delta(T)2iso} and \ref{T:T&M(T)2iso} implies that a weighted shift and its Aluthge or mean tranform cannot be simultaneously $2$-isometries, except if the weighted shift is an isometry. Do we have the same result for general operators, for $m$-isometries and generalized Aluthge and mean transforms? 
\end{remark}
\bibliographystyle{plain}
\bibliography{biblio}

\end{document}